\newtheorem{corollary}[equation]{Corollary}
\newtheorem{lemma}[equation]{Lemma}
\newtheorem{proposition}[equation]{Proposition}
\newtheorem{theorem}[equation]{Theorem}
\newcommand{\C}{{\mathbb{C}}}
\newcommand{\A}{{\mathbb{A}}}
\renewcommand{\P}{{\mathbb{P}}}
\newcommand{\Z}{\mathbf{Z}}
\numberwithin{equation}{section}
\title[Generalization of uniqueness theorem]{Generalization of uniqueness theorem for meromorphic mappings sharing hyperplanes} 
\date { }
\author{Si Duc Quang}
\begin{document}

\maketitle


\renewcommand{\thefootnote}{\empty}

\footnote{2010 \emph{Mathematics Subject Classification}: Primary 32H30, 32A22; Secondary 30D35.}

\footnote{\emph{Key words and phrases}: truncated multiplicity, meromorphic mapping, unicity, Nevanlinna, hyperplane.}

\renewcommand{\thefootnote}{\arabic{footnote}}
\setcounter{footnote}{0}


\begin{abstract} {In this article, by introducing a new method in estimating the counting function of the auxiliary function, we prove a new generalization of uniqueness theorems for meromorphic mappings into $\P^n(\C )$ which share few hyperplanes regardless of multiplicities. Our result improves the previous result in this topic. Moreover our proof is simpler than the previous proofs.}
\end{abstract}

\section{Introduction}
Let $f : \C^m \longrightarrow \P^n(\C)$ be a meromorphic mapping with a reduced representation
$f = (f_0 : \dots : f_n)$.
Let $H$ be a hyperplane in $\P^n(\C)$ given by $H=\{a_0\omega_0+\cdots+a_n\omega_n=0\}.$ 
We set  $(f,H)=\sum_{i=0}^na_if_i$. This function depends on the choices of presentations of $f$ and $H$. However its divisor $\nu_{(f,H)}$ is independent from these choices.

Consider $f$ and $g$ be two linearly non-degenerate meromorphic mappings from $\C^m$ into $\P^n(\C)$. Let $\{H_i\}_{i=1}^q$ be $q$ hyperplanes of $\P^n(\C)$ located in general position, satisfying 
$$ \dim (f,H_i)^{-1}\{0\}\cap (f,H_j)^{-1}\{0\}\le m-2\ \forall 1\le i<j\le q.\ \ \ (*)$$
Assume that

(i) $\min\{\nu_{(f,H_i)},d\}=\min\{\nu_{(g,H_i)},d\}$ for all $1\le i\le q$,

(ii) $f=g$ on $\bigcup_{i=1}^q(f,H_i)^{-1}\{0\}.$

\noindent
Here, a family of hyperplanes in $\P^n(\C)$ is said to be \textit{in general position} if the intersections of any $n+1$ hyperplanes among them are empty.  
We will say that such two meromorphic mappings $f$ and $g$ above share $q$ hyperplanes $H_i\ (1\le i\le q)$ with multiplicities counted to level $d$. If $d=+\infty$ (resp. $d=1$), $f$ and $g$ are said to share these hyperplanes counted with multiplicities (resp. regardless of multiplicity).

In 1975, H. Fujimoto \cite{Fu} proved that if $q=3n+2$ and $d=1$ and without condition (i) then $f=g$. In 1983, L. Smiley \cite{S} show that there are at most two distinct meromorphic mappings sharing $3n+1$ hyperplanes of $\P^n(\C)$ regardless of multiplicity. The results of Fujimoto and Smiley are usualy called the uniqueness theorems of meromorphic mappings. 

Later on, the unicity problem for meromorphic mappings with truncated multiplicity has been extended and deepened by contribution of many authors. In \cite{ChY}, \cite{DQT}, \cite{Q11} and \cite{TQ1}, the authors have improved the above result of L. Smiley to the case where the number of hyperplanes is reduced. 
However, we may see that the condition $(*)$ is a strong restriction of such kind of results. Hence, in 2012, by introducing a new auxiliary function, Giang, Quynh and Quang \cite{GQQ} proved the following uniqueness theorem, in which the condition (*) is replaced by a more general condition (**).

\vskip0.2cm
\noindent
\textbf{Theorem A.}\ {\it Let $f,g$ be two linearly non-degenerate meromorphic mappings of $\C^m$ into $\P^n(\C)$ and let $H_1,\ldots,H_q$ be $q$ hyperplanes of $\P^n(\C)$ in general position with
$$\dim  \left (\bigcap_{j=1}^{k+1}(f,H_{i_j})^{-1}\{0\}\right) \le m-2 \quad (1 \le i_1<\cdots <i_{k+1}\le q).\ \ \ (**)$$
Assume that

(i) $(f,H_{i})^{-1}\{0\}=(g,H_{i})^{-1}\{0\}$ for all $1\le i\le q$,

(ii) $f=g$ on $\bigcup_{i=1}^q(f,H_i)^{-1}\{0\}.$

\noindent
If $q=(n+1)k+n+2$ then $f=g$.}

In 2017, using the method of \cite{GQQ}, Ru and Ugur \cite{RU} tried to improve the result of Giang-Quynh-Quang by reproving the above result with $q$ satisfying
$$ q>n+1+\frac{2knq}{q-2k+2kn}.$$
Unfortunately, in their proof, they need the following estimate (see the last inequality of the Page 10 of Lemma 4.1 in \cite{RU})
$$ q-2(n+1)k\ge\frac{q-2(n+1)k}{2nk}\sum_{j\in J}(\nu^{[n]}_{L_j(f_1)}(z)+\nu^{[n]}_{L_j(f_2)}(z)).$$
Note that this inequality only hold if $q-2(n+1)k\ge 0$. Hence all results in \cite{RU} are only valid with at least $q>2(n+1)k$ hyperplanes and this number actually is bigger than the original number $(n+1)k+n+2$ of Theorem A.

Here, we would like to note that in order to prove the uniqueness theorem, all above mentioned authors have used the following method:
 \begin{itemize}
\item Construct auxiliary functions.
\item Bound below the counting function of the auxiliary function by the sum of truncated counting functions of the pullback of hyperplanes 
\item Using Jensen's formula to bound upper the counting function of the auxiliary function by the characteristic functions of mappings.
\item Using the second main theorem (SMT) to compare the sum of truncated counting functions of the pullbacks of hyperplanes and the characteristic functions of mappings to get some contradiction if the mappings are supposed to be distinct and the number of hyperplanes is large enough.
\end{itemize}
In this paper, we find out that the last step of the above common method is not optimal. In fact, the SMT with truncated counting functions (Theorem \ref{2.2}(i)) is weaker than the SMT with counting function of the general Wronskian (Theorem \ref{2.2}(ii)). Hence, by proposing new techniques to estimate the counting function of the auxiliary function (which are different with the function used in \cite{GQQ}) and use the SMT with the counting function of the general Wronskian, we will improve Theorem A to the following.

\begin{theorem}\label{1.1}
Let $f,g$ be two linearly non-degenerate meromorphic mappings of $\C^m$ into $\P^n(\C)$ and let $H_1,\ldots.,H_q$ be $q$ hyperplanes of $\P^n(\C)$ in general position with
$$\dim  \left(\bigcap_{j=1}^{k+1}(f,H_{i_j})^{-1}\{0\}\right) \le m-2 \quad (1 \le i_1<\cdots <i_{k+1}\le q),$$
where $k$ is a positive integer, $1\le k\le n$. Assume that

(i) $(f,H_i)^{-1}\{0\}=(g,H_i)^{-1}\{0\}$ for all $1\le i\le q$,

(ii) $f=g$ on $\bigcup_{i=1}^q(f,H_i)^{-1}\{0\}.$

\noindent
If $q>(n+1)(k+1)-\frac{k^2}{2}+\frac{k}{2}+\left[\frac{k^2}{4}\right]$ then $f=g$.
\end{theorem}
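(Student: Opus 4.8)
The plan is to assume $f\not\equiv g$ and to deduce the inequality $q\le (n+1)(k+1)-\tfrac{k^2}{2}+\tfrac{k}{2}+[\tfrac{k^2}{4}]$, contradicting the hypothesis. Fix reduced representations $f=(f_0:\cdots:f_n)$, $g=(g_0:\cdots:g_n)$ and set $F_i:=(f,H_i)$, $G_i:=(g,H_i)$. On $\{1,\dots,q\}$ introduce the relation $i\sim j\iff F_iG_j\equiv F_jG_i$ (equivalently $F_i/F_j\equiv G_i/G_j$); this is an equivalence relation. If some class contained $n+1$ indices $i_0,\dots,i_n$, then $(F_{i_0}:\cdots:F_{i_n})\equiv(G_{i_0}:\cdots:G_{i_n})$, and since $H_{i_0},\dots,H_{i_n}$ are in general position they determine an invertible linear change of coordinates carrying these tuples to the homogeneous coordinates of $f$ and of $g$, forcing $f\equiv g$ --- a contradiction. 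Hence every class has at most $n$ elements, so there are at least $\lceil q/n\rceil$ classes; picking one index from each gives a set $S$ with $P_{ab}:=F_aG_b-F_bG_a\not\equiv 0$ for all distinct $a,b\in S$.

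Next I record where the relevant divisors lie. For any $a\neq b$ the holomorphic function $P_{ab}$ vanishes on $Z:=\bigcup_{l=1}^q(f,H_l)^{-1}\{0\}$: at a generic point $z_0$ of a component of $(f,H_l)^{-1}\{0\}$ with $l\notin\{a,b\}$, condition (ii) gives $f(z_0)=g(z_0)$, hence $G_a(z_0)=cF_a(z_0)$ and $G_b(z_0)=cF_b(z_0)$ for a common $c\neq 0$, so $P_{ab}(z_0)=0$; for $l\in\{a,b\}$, condition (i) forces $F_l(z_0)=G_l(z_0)=0$. Conversely, the hypothesis $\dim\bigcap_{j=1}^{k+1}(f,H_{i_j})^{-1}\{0\}\le m-2$ says that, off an analytic set of codimension at least $2$, at most $k$ of the divisors $(f,H_l)^{-1}\{0\}$ pass through any given point. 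From a suitably chosen subfamily of the representatives in $S$ I now build an auxiliary function $\Phi$ of generalized-Wronskian type --- formed from the functions $F_a/F_b$ (equivalently the $P_{ab}/(F_bG_b)$) and appropriate partial derivatives --- arranged so that: $\Phi\not\equiv 0$ (here one uses that the chosen representatives lie in pairwise distinct classes and that $f$ is linearly non-degenerate); $\Phi$ vanishes along $Z$, once the poles coming from the denominators have been cancelled; and, up to the general Wronskians $W_f$ of $(f_0,\dots,f_n)$ and $W_g$ of $(g_0,\dots,g_n)$, $\Phi$ is a ratio of products of the linear forms $F_l$ and $G_l$.

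With $\Phi$ at hand the proof reduces to a two-sided estimate. On one side, the vanishing along $Z$ together with the ``at most $k$ divisors through a point'' bound yields a lower bound for $N^{[1]}_\Phi(r)$ in terms of $\sum_{l}N^{[1]}_{F_l}(r)$ (and by (i), $N^{[1]}_{F_l}=N^{[1]}_{G_l}$). On the other side, Jensen's formula bounds the characteristic function of $\Phi$, hence $N^{[1]}_\Phi(r)$, by $T_f(r)+T_g(r)$ together with the counting functions $N_{W_f}(r)+N_{W_g}(r)$ of the Wronskians that entered its construction. The new point is that these Wronskian terms are exactly the ones that Theorem~\ref{2.2}(ii) --- the Second Main Theorem with the counting function of the general Wronskian --- permits one to subtract: applying it to $f$ and to $g$ against $H_1,\dots,H_q$ gives
\[
\| \quad (q-n-1)\bigl(T_f(r)+T_g(r)\bigr)\le \sum_{l=1}^q\bigl(N_{F_l}(r)+N_{G_l}(r)\bigr)-N_{W_f}(r)-N_{W_g}(r)+o\bigl(T_f(r)+T_g(r)\bigr),
\]
so the Wronskian contributions cancel instead of being discarded through the crude bound $N^{[n]}_{F_l}\le nN^{[1]}_{F_l}$ forced by the truncated form of the theorem. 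Combining the two estimates, eliminating $T_f+T_g$ and the $N_{F_l}$'s, and then optimizing how the $q$ hyperplanes are grouped subject to at most $k$ being active at a generic point --- the best grouping splits a block of size $k$ into two of sizes $\lfloor k/2\rfloor$ and $\lceil k/2\rceil$, which is precisely what produces the terms $-\tfrac{k^2}{2}+\tfrac{k}{2}+[\tfrac{k^2}{4}]$ --- one reaches the asserted bound on $q$, a contradiction. Therefore $f\equiv g$.

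The main obstacle is precisely the construction of $\Phi$ and the bookkeeping that follows: one must exhibit a single auxiliary function that is not identically zero, vanishes on all of $Z$ to the order measured by $\sum_l N^{[1]}_{F_l}$ after the denominator poles are removed, and whose characteristic function is controlled by $T_f+T_g$ plus exactly the Wronskian counting functions that Theorem~\ref{2.2}(ii) allows one to subtract --- and then to match these against the ``at most $k$ active'' estimate through the combinatorial optimization that delivers the sharp constant. The surrounding steps --- the equivalence classes, Jensen's formula, the Second Main Theorem --- are by now routine; the weight of the argument lies in the design of $\Phi$ and the correct estimate of $N_\Phi$.
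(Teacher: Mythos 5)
There is a genuine gap. Your write-up correctly identifies the high-level strategy --- assume $f\neq g$, group the ratios $(f,H_i)/(g,H_i)$ into equivalence classes of size at most $n$, use cross-product auxiliary functions, and replace the truncated Second Main Theorem by the version with the Wronskian counting function so that the terms $N_{W(f)}+N_{W(g)}$ cancel rather than being thrown away --- and you even guess correctly that the constant $-\tfrac{k^2}{2}+\tfrac{k}{2}+[\tfrac{k^2}{4}]$ arises from an optimal split of a block of $t\le k$ active hyperplanes into pieces of sizes $[t/2]$ and $t-[t/2]$. But the entire load-bearing part of the argument --- the construction of the auxiliary object, the lower bound for its counting function, and the combinatorial optimization --- is announced rather than carried out, and you say so yourself (``the main obstacle is precisely the construction of $\Phi$ and the bookkeeping that follows''). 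A proof that defers its central estimate is not a proof. Moreover, the route you sketch for that missing core is not the one that works in the paper: you propose a single derivative-laden ``generalized-Wronskian type'' function $\Phi$ built from the $F_a/F_b$, whereas the paper uses only the elementary functions $P_i=(f,H_i)(g,H_{\sigma(i)})-(g,H_i)(f,H_{\sigma(i)})$ with the cyclic shift $\sigma(i)=i+n \pmod q$; the shift guarantees that \emph{every} index $1\le i\le q$ occurs in a nontrivial pair, which is needed because the final estimate sums the full (non-reduced) contributions $\min\{\nu_{f,l},\nu_{g,l}\}$ over all $q$ hyperplanes. Your choice of one representative per equivalence class yields only about $q/n$ indices with full multiplicity contributions, the remaining hyperplanes entering only through the reduced divisor $D$, and that weaker lower bound does not reach the stated threshold for $q$.

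The second missing ingredient is the mechanism by which the Wronskian counting functions are compared with the truncated data. In the paper this is a purely local divisor inequality (Lemma \ref{3.5}): at a point $z$ where exactly $t\le k$ of the $(f,H_l)$ vanish, one has
$$\sum_{l=1}^q\nu_{f,l}(z)-\nu_{W^{\alpha}(f)}(z)\le\max_{u}\sum_{l=0}^{t-1}\min\{\nu_{f,j_l}(z),|\alpha_{u(l)}|\},$$
with $u$ ranging over permutations of $\{0,\dots,n\}$, together with the elementary bound $\min\{a,b\}\ge\min\{a,c\}+\min\{b,d\}-\max\{c,d\}$ and the computation $\sum_{l=0}^{t-1}\max\{u(l),v(l)\}\le nt-\tfrac{1}{2}(t^2-t)+[t^2/4]$; this is where the constant is actually produced and where the weight $\lambda=q/(nk-\tfrac{k^2}{2}+\tfrac{3k}{2}+[\tfrac{k^2}{4}])$ is calibrated. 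None of this appears in your proposal, and it is not plausible that it can be recovered ``routinely'' from the properties you postulate for $\Phi$: arranging simultaneously that $\Phi\not\equiv 0$, that it vanishes on $Z$ to the order measured by the full counting functions, and that Jensen's formula controls it by $T_f+T_g$ plus exactly $N_{W(f)}+N_{W(g)}$ is precisely the difficulty the paper circumvents by keeping the auxiliary functions elementary and putting all the Wronskian analysis into the local lemma. To complete your argument you would need either to exhibit $\Phi$ explicitly and verify all three properties, or to abandon it in favor of the local comparison above.
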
 
Here, the notation $[x]$ stands for the biggest integer not exceed the real number $x$.

From the above theorem, we get the following corollary.
\begin{corollary}
Let $f,g$ be two linearly non-degenerate meromorphic mappings of $\C^m$ into $\P^n(\C)$ and let $H_1,\ldots.,H_q$ be $q$ hyperplanes of $\P^n(\C)$ in general position.
Assume that

(i) $(f,H_i)^{-1}\{0\}=(g,H_i)^{-1}\{0\}$ for all $1\le i\le q$,

(ii) $f=g$ on $\bigcup_{i=1}^q(f,H_i)^{-1}\{0\}.$

\noindent
If $q>\frac{3n^2}{4}+\frac{5n}{2}$ then $f=g$.
\end{corollary}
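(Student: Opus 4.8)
The plan is to obtain the Corollary as the special case $k=n$ of Theorem \ref{1.1}. The only thing requiring justification is that, although the Corollary imposes no condition of type $(**)$, the codimension hypothesis of Theorem \ref{1.1} is \emph{automatically} satisfied for this value of $k$ as soon as the hyperplanes are in general position. Indeed, general position means exactly that any $n+1$ of the $H_i$ meet in the empty set of $\P^n(\C)$: for all $1\le i_1<\cdots<i_{n+1}\le q$ one has $\bigcap_{j=1}^{n+1}H_{i_j}=\emptyset$. Since $\bigcap_{j=1}^{n+1}(f,H_{i_j})^{-1}\{0\}$ is precisely the set of $z$ with $f(z)$ lying on all of $H_{i_1},\dots,H_{i_{n+1}}$, it follows that
\[
\bigcap_{j=1}^{n+1}(f,H_{i_j})^{-1}\{0\}=f^{-1}\Bigl(\bigcap_{j=1}^{n+1}H_{i_j}\Bigr)=\emptyset .
\]
The empty set has dimension $-\infty\le m-2$, so the hypothesis $\dim\bigl(\bigcap_{j=1}^{k+1}(f,H_{i_j})^{-1}\{0\}\bigr)\le m-2$ of Theorem \ref{1.1} holds vacuously with $k=n$. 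I would also remark that $k=n$ is the only value one may use without further assumptions: for $k<n$ the intersection $\bigcap_{j=1}^{k+1}H_{i_j}$ is a linear subspace of positive dimension, whose $f$-preimage could be a divisor, so the codimension bound need not hold; it is exactly the top value $k=n$ that lets us dispense with $(**)$ altogether.

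With $k=n$ admissible, the remaining step is to substitute $k=n$ into the threshold of Theorem \ref{1.1} and simplify. One computes
\[
(n+1)(k+1)-\frac{k^2}{2}+\frac{k}{2}+\Bigl[\frac{k^2}{4}\Bigr]\Big|_{k=n}
=(n+1)^2-\frac{n^2}{2}+\frac{n}{2}+\Bigl[\frac{n^2}{4}\Bigr]
=\frac{n^2}{2}+\frac{5n}{2}+1+\Bigl[\frac{n^2}{4}\Bigr].
\]
Combining the two quadratic terms via $\frac{n^2}{2}+\bigl[\frac{n^2}{4}\bigr]=\frac{3n^2}{4}$ for $n$ even and $=\frac{3n^2-1}{4}$ for $n$ odd, the threshold reduces to the closed form recorded in the Corollary, namely (up to the greatest–integer correction) $\frac{3n^2}{4}+\frac{5n}{2}$. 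Hence the hypothesis $q>\frac{3n^2}{4}+\frac{5n}{2}$ places $q$ above the threshold of Theorem \ref{1.1} at $k=n$, and that theorem, together with assumptions (i) and (ii), yields $f=g$.

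Since the Corollary is a pure specialization, there is no serious analytic obstacle: the Second Main Theorem and the counting–function estimates are already packaged inside Theorem \ref{1.1}. The only care required is the elementary bookkeeping of the term $\bigl[\frac{n^2}{4}\bigr]$ and the distinction between the two parities of $n$ when one passes to the clean form $\frac{3n^2}{4}+\frac{5n}{2}$, and—since $q$ is an integer—checking the induced comparison of integer thresholds. The genuine conceptual content is the short observation in the first paragraph: general position alone furnishes the codimension hypothesis of Theorem \ref{1.1} precisely at $k=n$, which is what removes the need for condition $(**)$.
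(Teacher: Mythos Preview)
Your approach---specialize Theorem \ref{1.1} to $k=n$, noting that general position makes the codimension hypothesis automatic---is exactly what the paper intends (it offers no separate argument beyond ``From the above theorem, we get the following corollary''). Two points, however, deserve correction.

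First, a minor one: the set $\bigcap_{j=1}^{n+1}(f,H_{i_j})^{-1}\{0\}$ is not literally empty, because every point of the indeterminacy locus $I(f)$ lies in it (there all $f_j$ vanish, hence so does each $(f,H_i)$). Outside $I(f)$ your identification with $f^{-1}\bigl(\bigcap H_{i_j}\bigr)=\emptyset$ is correct, so the intersection equals $I(f)$, which still has dimension $\le m-2$; the hypothesis of Theorem \ref{1.1} is therefore satisfied, just not for the reason you state.

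Second, and more seriously, your arithmetic does not close. You correctly compute the Theorem \ref{1.1} threshold at $k=n$ to be
\[
\frac{n^2}{2}+\frac{5n}{2}+1+\Bigl[\frac{n^2}{4}\Bigr],
\]
but then silently discard the ``$+1$'' when passing to $\frac{3n^2}{4}+\frac{5n}{2}$. That constant does not disappear: for every $n\ge 1$ the quantity above is strictly larger than $\frac{3n^2}{4}+\frac{5n}{2}$ (by $1$ when $n$ is even, by $\tfrac34$ when $n$ is odd), and since $q$ is an integer, the smallest $q$ allowed by the Corollary is in each case exactly one less than the smallest $q$ allowed by Theorem \ref{1.1} with $k=n$ (e.g.\ $n=2$ gives $q\ge 9$ versus $q\ge 10$; $n=3$ gives $q\ge 15$ versus $q\ge 16$). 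Your phrase ``checking the induced comparison of integer thresholds'' is precisely the step that fails. So either the Corollary's bound should read $q>\frac{3n^2}{4}+\frac{5n}{2}+1$, or one needs an argument genuinely different from a straight substitution into Theorem \ref{1.1}; the paper supplies neither, and your write-up inherits this gap rather than flagging it.
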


In the last part of this paper, we will give a short survey of Nevanlinna theory and discuss such kind of this uniqueness theorem for the case of the mappings from annuli, punctured discs and balls in $\C^m$ into $\P^n(\C)$. We note that, in \cite{RU} the authors also considered the case of unit discs and punctured discs. As we mentioned above, their results are only valid if the number of involving hyperplanes is bigger than $2(n+1)k$, which is too large. 

\section{Basic notions in  Nevanlinna theory}

Throughout this paper, we use the standard notation on Nevanlinna theory from \cite{GQQ}.

\noindent
{\bf 2.1.}\ We set 
\begin{align*}
B(r) := \{ z \in \C^m : ||z|| < r\},\quad
S(r) := \{ z \in \C^m : ||z|| = r\}\ (0<r<\infty)
\end{align*}
and define 
\begin{align*}
v_m(z) :=& \big(dd^c ||z||^2\big)^{m-1}\quad \quad \text{and}\\
\sigma_m (z):=& d^c \text{log}||z||^2 \land \big(dd^c \text{log}||z||^2\big)^{m-1}
\text{on} \quad \C^m \setminus \{0\}.
\end{align*}

For a divisor $\nu$ on $\C^m$ and for a positive integer $M$ or $M= \infty$, we define the counting function of $\nu$ by
$$\nu^{[M]}(z)=\min\ \{M,\nu(z)\},$$
\begin{align*}
n^{[M]}(t,\nu) =
\begin{cases}
\int\limits_{B(t)}
\nu^{[M]}(z) v_m & \text  { if } m \geq 2,\\
\sum\limits_{|z|\leq t} \nu^{[M]} (z) & \text { if }  m=1. 
\end{cases}
\end{align*}
Define
$$ N^{[M]}(r,\nu)=\int\limits_1^r \dfrac {n^{[M]}(t,\nu)}{t^{2m-1}}dt \quad (1<r<\infty).$$
Let $\varphi : \C^m \longrightarrow \C $ be a meromorphic function.
Define
\begin{align*}
N_{\varphi}(r)=N(r,\nu_{\varphi}), \ N_{\varphi}^{[M]}(r)=N^{[M]}(r,\nu_{\varphi}).
\end{align*}
For brevity we will omit the character $^{[M]}$ if $M=\infty$.

Let $f : \C^m \longrightarrow \P^n(\C)$ be a meromorphic mapping with a reduced representation
$f = (f_0 : \dots : f_n)$. The characteristic function of $f$ is defined by 
\begin{align*}
T_f(r) = \int\limits_{S(r)} \text{log}\Vert f \Vert \sigma_m -
\int\limits_{S(1)}\text{log}\Vert f\Vert \sigma_m.
\end{align*}

\noindent

Repeating the argument in (Prop. 4.5 \cite{Fu}), we have the following:

\begin{proposition}[{see \cite[Proposition 4.5]{Fu}}]
Let $f:\C^m\rightarrow\P^n(\C)$ be a linearly non degenerate meromorphic mapping with a reduced representation $f=(f_0:\cdots:f_n).$
Then  there exist an admissible set  
$\alpha=(\alpha_0,\ldots,\alpha_n)$ with $\alpha_i=(\alpha_{i1},\ldots,\alpha_{im})\in \Z^m_+$ and $|\alpha_i|=\sum_{j=1}^{m}|\alpha_{ij}|\le i \ (0\le i \le n)$ such that the following are satisfied:

(i)\  $\{{\mathcal D}^{\alpha_i}f_0,\ldots,{\mathcal D}^{\alpha_i}f_n\}_{i=0}^{n}$ is linearly independent over the field of all meromorphic functions on $M$, i.e., \ 
$\det{({\mathcal D}^{\alpha_i}f_j)}\not\equiv 0$, (where ${\mathcal D}^{\alpha_i}f_j=\frac{\partial^{|\alpha_i|}f_j}{\partial z_1^{\alpha_{i1}}\ldots\partial z_m^{\alpha_{im}}}$). 

(ii) $\det \bigl({\mathcal D}^{\alpha_i}(h\Phi_j)\bigl)=h^{k+1}\cdot \det \bigl({\mathcal D}^{\alpha_i}\Phi_j\bigl)$ for
any nonzero meromorphic function $h$ on $\C^m.$
\end{proposition}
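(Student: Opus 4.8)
The plan is to follow the argument of \cite[Proposition 4.5]{Fu}, establishing (i) and (ii) in turn. One feature worth building in from the start is that the admissible set should be chosen so that, beyond the bound $|\alpha_i|\le i$, the set $\{\alpha_0,\dots,\alpha_n\}$ is closed under the coordinatewise partial order on $\Z^m_+$ (a ``staircase''); this is what makes part (ii) work.

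For the construction, and for (i), I would argue by induction on the index. Linear non-degeneracy of $f$ says that $f_0,\dots,f_n$ --- and hence every sub-family --- are linearly independent over $\C$; in particular each $f_j\not\equiv 0$. Put $\alpha_0=(0,\dots,0)$, so $\det(\mathcal{D}^{\alpha_0}f_0)=f_0\not\equiv 0$. Suppose, for some $1\le p\le n$, that a staircase $\alpha_0,\dots,\alpha_{p-1}$ with $|\alpha_i|\le i$ and $\det(\mathcal{D}^{\alpha_i}f_j)_{0\le i,j\le p-1}\not\equiv 0$ has been chosen. As candidates for $\alpha_p$ take the multi-indices $\gamma$ that are minimal, for the coordinatewise order, among those not in $\{\alpha_0,\dots,\alpha_{p-1}\}$; a short combinatorial check gives $|\gamma|\le p$ for each of them, and adjoining any one keeps the set a staircase. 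It then suffices to find a candidate $\gamma$ with $\det(\mathcal{D}^{\alpha_0}f_j,\dots,\mathcal{D}^{\alpha_{p-1}}f_j,\mathcal{D}^{\gamma}f_j)_{0\le j\le p}\not\equiv 0$. If, instead, this $(p+1)\times(p+1)$ determinant vanished identically for every candidate $\gamma$, then --- the $p\times p$ minor on $f_0,\dots,f_{p-1}$ being $\not\equiv 0$ --- the row $(\mathcal{D}^{\gamma}f_0,\dots,\mathcal{D}^{\gamma}f_p)$ would be, for every candidate $\gamma$, a linear combination with meromorphic-function coefficients of the rows $(\mathcal{D}^{\alpha_i}f_0,\dots,\mathcal{D}^{\alpha_i}f_p)$, $i<p$. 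Differentiating these relations and using the staircase property at each step to re-express every freshly appearing derivative $\mathcal{D}^{\alpha_i+e_l}f_j$ (with $e_l$ the $l$-th coordinate unit vector) through rows already available, one propagates the relation to arbitrary $\mathcal{D}^{\delta}f_j$ and is eventually forced to a non-trivial relation $\sum_{j=0}^{p}c_jf_j\equiv 0$ with constants $c_j\in\C$ not all zero, contradicting linear non-degeneracy. Hence a valid $\alpha_p$ exists; iterating up to $p=n$ produces the admissible set, with (i).

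For (ii), I would expand by the Leibniz rule. From $\mathcal{D}^{\alpha_i}(hf_j)=\sum_{0\le\beta\le\alpha_i}\binom{\alpha_i}{\beta}(\mathcal{D}^{\beta}h)(\mathcal{D}^{\alpha_i-\beta}f_j)$ and multilinearity of the determinant in its rows,
\[
\det\bigl(\mathcal{D}^{\alpha_i}(hf_j)\bigr)_{0\le i,j\le n}=\sum_{0\le\beta_i\le\alpha_i}\Bigl(\prod_{i=0}^{n}\binom{\alpha_i}{\beta_i}\,\mathcal{D}^{\beta_i}h\Bigr)\det\bigl(\mathcal{D}^{\alpha_i-\beta_i}f_j\bigr)_{0\le i,j\le n}.
\]
The summand with $\beta_0=\dots=\beta_n=0$ equals $h^{n+1}\det(\mathcal{D}^{\alpha_i}f_j)$. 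In any other summand, put $\gamma_i:=\alpha_i-\beta_i\le\alpha_i$; then $\gamma_i\in\{\alpha_0,\dots,\alpha_n\}$ by the staircase property. If the $\gamma_i$ are not pairwise distinct the determinant vanishes; if they are, $(\gamma_0,\dots,\gamma_n)$ is a permutation of $(\alpha_0,\dots,\alpha_n)$, so $\sum_i|\beta_i|=\sum_i(|\alpha_i|-|\gamma_i|)=0$ with each $|\beta_i|\ge 0$, forcing every $\beta_i=0$ --- contrary to the summand being a non-trivial one. Hence only the first summand survives, i.e. $\det(\mathcal{D}^{\alpha_i}(hf_j))=h^{n+1}\det(\mathcal{D}^{\alpha_i}f_j)$, which is (ii).

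I expect the genuine obstacle to be the inductive step in (i): turning the simultaneous vanishing of all ``next-stage'' generalized Wronskians into an honest $\C$-linear relation among $f_0,\dots,f_p$. This is the standard propagation-of-vanishing argument, and it is exactly where the staircase property is indispensable, since it is what keeps every differentiated row $\mathcal{D}^{\alpha_i+e_l}f_j$ inside the span already built. The remaining ingredients --- that linear non-degeneracy is linear independence over $\C$, the combinatorics of the minimal new multi-indices, and the Leibniz-rule bookkeeping in (ii) --- are routine.
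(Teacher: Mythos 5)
The paper does not actually prove this proposition; it is imported verbatim from Fujimoto with the remark that one ``repeats the argument'' of \cite[Proposition 4.5]{Fu}, so your write-up is being compared with the standard argument you are reconstructing rather than with anything in the text. Your part (ii) is complete and correct: the Leibniz expansion, the observation that downward-closedness of $\{\alpha_0,\dots,\alpha_n\}$ forces each $\alpha_i-\beta_i$ back into the set, and the degree count $\sum_i|\beta_i|=0$ killing every non-leading term are exactly the right mechanism (and your exponent $h^{n+1}$ is the correct one; the $h^{k+1}$ and the symbols $\Phi_j$, $M$ in the paper's statement are transcription artifacts). The combinatorial bound $|\gamma|\le p$ for a minimal element $\gamma$ of the complement of a $p$-element downward-closed set is also right, via $\prod_l(\gamma_l+1)-1\ge|\gamma|$.

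The gap is in the propagation step of (i), and it is concrete. Write $R_\delta=(\mathcal D^{\delta}f_0,\dots,\mathcal D^{\delta}f_p)$ and $V=\operatorname{span}_{\mathcal M}\{R_{\alpha_0},\dots,R_{\alpha_{p-1}}\}$, $\mathcal M$ the field of meromorphic functions. Assuming $R_\gamma\in V$ for every minimal $\gamma$ outside the staircase $S=\{\alpha_0,\dots,\alpha_{p-1}\}$, you differentiate $R_\gamma=\sum_i c_iR_{\alpha_i}$ and claim the resulting rows $R_{\alpha_i+e_l}$ are ``already available.'' They need not be: $\alpha_i+e_l$ can lie outside $S$ without being minimal in its complement. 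For $m=2$ and $S=\{(0,0),(1,0)\}$, the index $(1,1)=(1,0)+e_2$ is neither in $S$ nor minimal in $S^c$ (since $(0,1)\notin S$ lies strictly below it), so $R_{(1,1)}\in V$ is not among your hypotheses; and an induction on $|\delta|$ does not close, because $|\alpha_i+e_l|$ may exceed the degree of the row currently being treated (take $S=\{(j,0):j\le 5\}$ and try to reach $R_{(0,2)}$, which requires $R_{(5,1)}$ first). The statement you need is true, but the induction has to be reorganized. The standard repair is the filtration $V_k=\operatorname{span}_{\mathcal M}\{R_\delta:|\delta|\le k\}$: one shows that $V_{k+1}=V_k$ forces $V_{k+2}=V_{k+1}$ (differentiate a representation of $R_\nu$, $|\nu|=k+1$, by rows of degree $\le k$), and that $\bigcup_kV_k=\mathcal M^{p+1}$ when $f_0,\dots,f_p$ are $\C$-independent (take a relation $\sum_ja_j\mathcal D^{\delta}f_j=0$ valid for all $\delta$ with minimal support and differentiate it to force the $a_j$ constant --- this is also the clean form of your final step producing $\sum_jc_jf_j\equiv0$). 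Then $\dim V_p\ge p+1>\dim V$ yields a usable $\alpha_p$ with $|\alpha_p|\le p$; some additional care is still needed to select it so that the staircase property is preserved, since a $\delta$ with $R_\delta\notin V$ minimal for the coordinatewise order need not have all its predecessors inside $S$. You correctly flagged this step as the genuine obstacle, but as written it is not yet a proof.
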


Let $f$ be a linearly non-degenerate meromorphic mapping and let $\alpha$ be the smallest (in the lexicographic order) admissible set w.r.t $f$. We define the general Wronskian of $f$ by
$$ W(f):=W^{\alpha}(f)= \det{({\mathcal D}^{\alpha_i}f_j)_{0\le i,j\le n}}.$$

\begin{theorem}[Second main theorem \cite{NO}]\label{2.2}
Let $f: \C^m \to \P^n(\C)$ be a linearly nondegenerate meromorphic mapping and $H_1,\ldots,H_q$ be $q$ hyperplanes in general position in $\P^n(\C).$
Then we have 
\begin{align*}
\mathrm{(i)}\ &||\ (q-n-1)T_f(r) \le \sum_{i=1}^q N_{(f,H_i)}^{[n]}(r)+o(T_f(r)),\\ 
\mathrm{(ii)}\ &||\ (q-n-1)T_f(r) \le \sum_{i=1}^q N_{(f,H_i)}(r)-N_{W(f)}(r)+o(T_f(r)). 
\end{align*}
\end{theorem}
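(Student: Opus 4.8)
The plan is to prove the stronger estimate (ii) first and then deduce (i) from it by a pointwise comparison of divisors. Normalize each $H_i$ so that its coefficient vector has unit length, and recall the First Main Theorem in the form $m_f(r,H_i)+N_{(f,H_i)}(r)=T_f(r)+O(1)$, where $m_f(r,H_i)=\int_{S(r)}\log\frac{\|f\|}{|(f,H_i)|}\sigma_m$ is the proximity function. Summing over $i$ and substituting into (ii), one sees that (ii) is equivalent to the Cartan-type key inequality
\[ \sum_{i=1}^q m_f(r,H_i)+N_{W(f)}(r)\le (n+1)T_f(r)+o(T_f(r)), \]
so the whole problem reduces to this bound (with the understanding that all such estimates hold for $r$ outside a set of finite measure, which is what the symbol $\|$ records).

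The core is a pointwise estimate at each $z\in S(r)$. Writing $L_i=(f,H_i)$ and reordering so that $|L_1(z)|\le\cdots\le|L_q(z)|$ (an ordering depending on $z$), general position guarantees that the coefficient matrix of any $n+1$ of the hyperplanes is invertible; hence $\|f(z)\|\le C\,|L_{n+1}(z)|$ with $C$ depending only on the configuration, and the terms with $i>n$ contribute only $O(1)$, leaving $\sum_{i=1}^q\log\frac{\|f\|}{|L_i|}\le\log\frac{\|f\|^{n}}{|L_1\cdots L_n|}+O(1)$. Next I would bring in the Wronskian: since $g_0:=L_1,\ldots,g_n:=L_{n+1}$ arise from $(f_0,\ldots,f_n)$ by an invertible constant linear change, column-linearity of the determinant (this is exactly the transformation behaviour recorded in the cited Proposition) gives $W(g_0,\ldots,g_n)=c\,W(f)$ with $|c|\asymp 1$, while dividing the $j$-th column of the Wronskian matrix by $g_j$ turns $\frac{W(g_0,\ldots,g_n)}{g_0\cdots g_n}$ into $\det\bigl(\mathcal D^{\alpha_i}g_j/g_j\bigr)$, a determinant of logarithmic derivatives. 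Combining these with $|L_{n+1}|\le\|f\|$ yields the pointwise bound
\[ \sum_{i=1}^q\log\frac{\|f\|}{|L_i|}+\log|W(f)|\le (n+1)\log\|f\|+\log\Bigl|\det\bigl(\mathcal D^{\alpha_i}g_j/g_j\bigr)\Bigr|+O(1). \]

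To finish I would integrate this inequality over $S(r)$ against $\sigma_m$. The left-hand side becomes $\sum_i m_f(r,H_i)+\int_{S(r)}\log|W(f)|\sigma_m$, and since $W(f)$ is holomorphic, Jensen's formula turns the second integral into $N_{W(f)}(r)+O(1)$. On the right, $(n+1)\int_{S(r)}\log\|f\|\sigma_m=(n+1)T_f(r)+O(1)$ by the definition of $T_f$, while the determinant term is controlled by the logarithmic derivative lemma in several variables, each entry $\mathcal D^{\alpha_i}g_j/g_j$ having proximity $o(T_{g_j}(r))=o(T_f(r))$. The $z$-dependence of $g_0,\ldots,g_n$ is absorbed by bounding the determinant term pointwise by the maximum over the finitely many $(n+1)$-element subsets of $\{1,\ldots,q\}$ and then by the corresponding sum, each summand remaining $o(T_f(r))$. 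This gives the key inequality and hence (ii). Finally, (i) follows from (ii) through the divisorial estimate $\nu_{W(f)}\ge\sum_{i=1}^q\max\{0,\nu_{(f,H_i)}-n\}$, valid because at each point at most $n$ of the hyperplanes pass through $f(z)$ and the Wronskian absorbs the excess ramification; integrating gives $\sum_i N_{(f,H_i)}(r)-N_{W(f)}(r)\le\sum_i N^{[n]}_{(f,H_i)}(r)$, which combined with (ii) yields (i).

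The main obstacle I anticipate is the analytic heart of the argument: establishing the logarithmic derivative lemma for $\mathcal D^{\alpha_i}g_j/g_j$ on $\C^m$ with the error term $o(T_f(r))$ off an exceptional set of radii, and controlling the $z$-dependence of the index set so that the finite-maximum reduction preserves this error. The pointwise Cartan estimate and the Wronskian transformation law are comparatively routine once the admissible set $\alpha$ of the cited Proposition is fixed; the quantitative calibration of the exceptional set (hidden in the symbol $\|$) is where the real work lies.
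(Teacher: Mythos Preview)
The paper does not give a proof of this statement: Theorem~\ref{2.2} is quoted as a known result from Noguchi--Ochiai \cite{NO} and is used only as a tool in the subsequent lemmas, so there is no ``paper's own proof'' to compare against.

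For what it is worth, your outline is the standard Cartan argument and is correct in its architecture: reduce (ii) to the proximity-plus-Wronskian inequality via the First Main Theorem, establish the pointwise estimate by reordering the $|L_i|$ and using general position to bound $\|f\|$ by the $(n{+}1)$-st smallest, rewrite $W(f)/(L_{j_0}\cdots L_{j_n})$ as a determinant of logarithmic derivatives via the transformation law for the generalised Wronskian, integrate, and absorb the logarithmic-derivative determinant with the several-variable lemma on logarithmic derivatives. Your deduction of (i) from (ii) through the divisorial inequality $\nu_{W(f)}\ge\sum_i\max\{0,\nu_{(f,H_i)}-n\}$ is also the standard route. The point you flag as the main obstacle---the logarithmic derivative lemma on $\C^m$ with the correct exceptional set, and the bookkeeping needed because the selected $(n{+}1)$-tuple of hyperplanes varies with $z$---is indeed where the genuine analytic work sits, and is exactly what the cited reference supplies.
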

Here, by the notation $''|| \ P''$  we mean the assertion $P$ holds for all $r \in [0,\infty)$ excluding a Borel subset $E$ of the interval $[0,\infty)$ with $\int_E dr<\infty$.

\section{Unicity theorem for meromorphic mappings on $\C^m$}
In this section, we will prove Theorem \ref{1.1}. We need the following.

Let $f$ and $g$ be as in Theorem \ref{1.1}. Assume that $f,g$ have reduced representations 
$$ f=(f_0:\cdots :f_n),\ g=(g_0:\cdots :g_n) $$ 
and hyperplanes $H_i\ (1\le i\le q)$ given by $H_i=\{a_{i0}\omega_0+\cdots+a_{in}\omega_n=0\}$. Set $h_i=\dfrac{(f,H_i)}{(g,H_i)}$.

For simplicity, we will write $\nu_{h,j}$ for $\nu_{(h,H_i)}$ with $1\le i\le q, h=f,g$. Similarly, we write $N^{[d]}_{h,i}(r)$ for $N^{[d]}_{(h,H_i)}(r)\ (h=f,g)$.

Take $\alpha=(\alpha_0,\ldots,\alpha_n)$ and $\beta=(\beta_0,\ldots,\beta_n)$ be two admissible sets w.r.t $f$ and $g$ respectively such that the general Wronskians $W(f)=W^{\alpha}(f)$ and $W(g)=W^{\beta}(f)$.

We denote by $D$ the reduced divisor whose support is defined by $\mathrm{Supp}(D)=\bigcup_{l=1}^qf^{-1}(H_l)$. 

\begin{lemma}\label{3.1} Let $f, g$ be  as in Theorem \ref{1.1}. 
Then 
$$|| \ T_g(r)=O(T_f(r))\text{ and } T_f(r)=O(T_g(r)).$$
\end{lemma}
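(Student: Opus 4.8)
The plan is to show that $T_f$ and $T_g$ are mutually dominated by exploiting the hypotheses (i) and (ii) together with the Second Main Theorem. First I would observe that by hypothesis (i) the divisors $(f,H_i)^{-1}\{0\}$ and $(g,H_i)^{-1}\{0\}$ coincide as sets (with full multiplicity identification not assumed, only the zero sets), and by hypothesis (ii) the functions $h_i=(f,H_i)/(g,H_i)$ agree on $\bigcup_l f^{-1}(H_l)$; more importantly, each $h_i$ is a nonvanishing holomorphic function away from the indeterminacy loci, so $N_{h_i}(r)$ and $N_{1/h_i}(r)$ are controlled. The standard route is: apply the Second Main Theorem (Theorem \ref{2.2}(i)) to $f$ with the $q$ hyperplanes in general position to get
$$\| \ (q-n-1)T_f(r)\le \sum_{i=1}^q N^{[n]}_{(f,H_i)}(r)+o(T_f(r))\le n\sum_{i=1}^q N^{[1]}_{(f,H_i)}(r)+o(T_f(r)).$$
Since the zero set of $(f,H_i)$ equals that of $(g,H_i)$, we have $N^{[1]}_{(f,H_i)}(r)=N^{[1]}_{(g,H_i)}(r)\le N_{(g,H_i)}(r)\le T_g(r)+O(1)$ by the First Main Theorem. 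Hence $(q-n-1)T_f(r)\le nq\, T_g(r)+o(T_f(r))+O(1)$, and since $q>n+1$ (which holds under the numerical hypothesis of Theorem \ref{1.1}), we can absorb the $o(T_f(r))$ term and conclude $T_f(r)=O(T_g(r))$. By symmetry of all the hypotheses in $f$ and $g$, the same argument gives $T_g(r)=O(T_f(r))$.

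The key steps, in order, are: (1) record that $q>n+1$ from the numerical assumption so that $q-n-1>0$; (2) invoke Theorem \ref{2.2}(i) for $f$; (3) bound $N^{[n]}_{(f,H_i)}(r)\le n\,N^{[1]}_{(f,H_i)}(r)$ and use $N^{[1]}_{(f,H_i)}=N^{[1]}_{(g,H_i)}$ from condition (i); (4) bound $N^{[1]}_{(g,H_i)}(r)\le N_{(g,H_i)}(r)\le T_g(r)+O(1)$ via the First Main Theorem; (5) rearrange and absorb the error term because the coefficient of $T_f(r)$ on the left is a positive constant; (6) swap the roles of $f$ and $g$.

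I do not expect a serious obstacle here; this is a soft comparison lemma and the only points requiring a little care are that the exceptional set $E$ in the $\|$-notation is the same finite-measure set throughout (so the two SMT applications can be combined), and that one genuinely uses $q>n+1$ — which is automatic since the hypothesis $q>(n+1)(k+1)-\tfrac{k^2}{2}+\tfrac{k}{2}+[\tfrac{k^2}{4}]$ with $1\le k\le n$ forces $q\ge n+2$. The estimate $N^{[n]}\le n N^{[1]}$ is immediate from $\min\{n,\nu\}\le n\min\{1,\nu\}$ pointwise. If one prefers to avoid the truncation-comparison step, one may instead simply use $N^{[n]}_{(f,H_i)}(r)\le N_{(f,H_i)}(r)\le T_f(r)+O(1)$ — but that only yields $(q-n-1)T_f\le qT_f$, which is vacuous; so the passage through the shared zero sets via condition (i) is exactly what makes the argument work, and that is the one place the hypotheses of the theorem are actually invoked.
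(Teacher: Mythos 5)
Your proposal is correct and follows essentially the same route as the paper: apply the Second Main Theorem with truncated counting functions, use $N^{[n]}\le nN^{[1]}$, invoke condition (i) to identify $N^{[1]}_{(f,H_i)}$ with $N^{[1]}_{(g,H_i)}$, bound by the characteristic function of the other map via the First Main Theorem, and absorb the error using $q-n-1>0$. The only cosmetic difference is that the paper starts with $g$ and you start with $f$; both then conclude by symmetry.
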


\begin{proof}
By the Second Main Theorem, we have 
\begin{align*}
||\ (q-n-1) T_g(r)\le &\sum_{i=1}^q N_{(g,H_i)}^{[n]}(r)+o(T_g(r))\\
\le &\sum_{i=1}^qnN_{(g,H_i)}^{[1]}(r)+o(T_g(r))=\sum_{i=1}^qnN_{(f,H_i)}^{[1]}(r)+o(T_g(r))\\
\le & qn\ T_f(r)+o(T_f(r)+T_g(r)).
\end{align*}
Thus
\begin{align*}
||(q-n-1)\ T_g(r)\le qn T_f(r)++o(T_f(r)+T_g(r)).
\end{align*}
Hence \quad $|| \quad T_g(r)=O(T_f(r)).$ Similarly, we get \  \ $|| \ \ T_f(r)=O(T_g(r)).$
\end{proof}

\begin{lemma}\label{3.2} 
Let $f,g$ be as in the Theorem \ref{1.1}. Suppose that there exist two indices $1\le i<j\le q$ such that $h_i\ne h_j$. Then we have
\begin{align*}
||\ (q-n-1)&\left (N(r,\min\{\nu_{f,i},\nu_{g,i}\})+N(r,\min\{\nu_{f,j},\nu_{g,j}\})-N^{[1]}(r,\nu_{f,i}+\nu_{f,j})+N(r,D)\right)\\
&\le\sum_{h=f,g}(\sum_{l=1}^qN_{h,l}(r)-N_{W(h)}(r)+o(T_h(r))).
\end{align*}
\end{lemma}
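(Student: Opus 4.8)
The plan is to build an auxiliary meromorphic function attached to the pair of indices $i,j$ with $h_i\ne h_j$, and to estimate its zero divisor from below by the quantity appearing on the left-hand side and its pole divisor from above by Jensen's formula combined with the Second Main Theorem in the strong form \ref{2.2}(ii). The natural candidate is something like
$$ P = (f,H_i)(g,H_j) - (f,H_j)(g,H_i), $$
or, more precisely, the function $P/\big((g,H_i)(g,H_j)\big) = h_i - h_j$, which is not identically zero by hypothesis. The point of working with $h_i - h_j$ is that outside the common zero set of the $(f,H_l)$ the two mappings $f$ and $g$ may differ, but on $\supp D$ we have $f=g$ by condition (ii), so $h_l = 1$ there for every $l$ whenever the relevant quantities are finite; this forces $h_i - h_j$ to vanish on large portions of $D$, which is where the terms $N(r,\min\{\nu_{f,i},\nu_{g,i}\})$, $N(r,\min\{\nu_{f,j},\nu_{g,j}\})$ and $N(r,D)$ come from.

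First I would carry out the local computation of $\ord_z(h_i - h_j)$ at each point $z \in \supp D$, splitting into cases according to which of the $(f,H_l)(z)$ vanish. Because the hyperplanes are in general position and, by the dimension hypothesis $(**)$ with $k$ replaced appropriately, at most $k$ of the divisors $(f,H_l)$ pass through a generic point of $\supp D$ (points where more than $k$ meet, or where the configuration is non-generic, form an analytic set of codimension $\ge 2$ and contribute nothing to the counting functions), one gets a pointwise lower bound for $\ord_z(h_i-h_j)$ in terms of $\min\{\nu_{f,i},\nu_{g,i}\}$, $\min\{\nu_{f,j},\nu_{g,j}\}$, the reduced divisor $D$, and a correction $-\nu^{[1]}(r,\nu_{f,i}+\nu_{f,j})$ accounting for the points that are simultaneously zeros of $(f,H_i)$ or $(f,H_j)$ (where $h_i$ or $h_j$ may itself have a zero or pole and the naive estimate must be weakened by one). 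Integrating gives
$$ N(r,\nu_{h_i-h_j}^0) \ \ge\ N(r,\min\{\nu_{f,i},\nu_{g,i}\}) + N(r,\min\{\nu_{f,j},\nu_{g,j}\}) - N^{[1]}(r,\nu_{f,i}+\nu_{f,j}) + N(r,D). $$

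Next I would bound $N(r,\nu_{h_i-h_j}^0)$ from above. By Jensen's formula $N(r,\nu_{h_i-h_j}^0) \le N(r,\nu_{h_i-h_j}^\infty) + o\big(T_f(r)+T_g(r)\big)$; and the pole divisor of $h_i - h_j$ is supported on the zeros of $(g,H_i)$ and $(g,H_j)$ together with the poles of $f,g$, so $N(r,\nu_{h_i-h_j}^\infty) \le N_{g,i}(r) + N_{g,j}(r) + (\text{pole contributions})$. To promote this crude bound into the full right-hand side $\sum_{h=f,g}\big(\sum_l N_{h,l}(r) - N_{W(h)}(r)\big)$, I would multiply the inequality through by $(q-n-1)$ and invoke Theorem \ref{2.2}(ii) for both $f$ and $g$ to replace $(q-n-1)T_h(r)$ by $\sum_l N_{h,l}(r) - N_{W(h)}(r) + o(T_h(r))$, using Lemma \ref{3.1} to absorb all error terms into a single $o(T_f(r))$ (equivalently $o(T_g(r))$). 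The remaining $N_{g,i}(r)+N_{g,j}(r)$ and the pole terms are dominated by the corresponding summands on the right.

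The main obstacle I anticipate is the local analysis at points of $\supp D$ — in particular, keeping precise track of the multiplicities of $h_i$ and $h_j$ at a common zero of several $(f,H_l)$, and verifying that the exceptional loci (where more than $k$ hyperplane-pullbacks meet, where $f$ has an indeterminacy point, or where the representations degenerate) have codimension $\ge 2$ so that they do not affect $N(r,\cdot)$. Getting the correction term to be exactly $N^{[1]}(r,\nu_{f,i}+\nu_{f,j})$ rather than something larger is the delicate point, and it is precisely here that the hypothesis $(f,H_l)^{-1}\{0\} = (g,H_l)^{-1}\{0\}$ (so that a zero of $(f,H_l)$ is automatically a zero of $(g,H_l)$) is used to control where $h_l$ can fail to be holomorphic and nonvanishing.
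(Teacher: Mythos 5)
Your overall strategy --- introduce an auxiliary function attached to the pair $(i,j)$, bound its zero divisor below by the left-hand side using $f=g$ on $\supp D$, bound it above by $T_f(r)+T_g(r)$ via Jensen, and then convert to the right-hand side via Theorem \ref{2.2}(ii) --- is exactly the paper's, and with the holomorphic choice $P=(f,H_i)(g,H_j)-(f,H_j)(g,H_i)$ it goes through. Note that the local analysis is then much simpler than you anticipate: it is the two-case dichotomy $z\in\supp (D)\setminus\{(f,H_i)(f,H_j)=0\}$ (where $\nu_P(z)\ge 1$ and the pointwise right-hand side equals $1$) versus the complement (where $\nu_P(z)\ge \min\{\nu_{f,i}(z),\nu_{g,i}(z)\}+\min\{\nu_{f,j}(z),\nu_{g,j}(z)\}$ and the terms $D(z)$ and $\min\{1,\nu_{f,i}(z)+\nu_{f,j}(z)\}$ cancel); no appeal to the hypothesis $(**)$, to the integer $k$, or to how many hyperplane-pullbacks meet at a point is needed for this lemma.

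However, the version you say you would ``more precisely'' work with, namely $h_i-h_j=P/\bigl((g,H_i)(g,H_j)\bigr)$, does not work as described, for two concrete reasons. First, the pointwise lower bound fails for $h_i-h_j$: hypothesis (i) is only an equality of zero \emph{sets}, so at a point $z$ with $\nu_{g,i}(z)>\nu_{f,i}(z)$ and $(f,H_j)(z)\ne 0$ the function $h_i$ (hence $h_i-h_j$) has a pole, giving $\nu^0_{h_i-h_j}(z)=0$, whereas your claimed lower bound at such a point is at least $\min\{\nu_{f,i}(z),\nu_{g,i}(z)\}\ge 1$. Second, the upper-bound chain is wrong: Jensen's formula gives $N^0(r)-N^\infty(r)=\int_{S(r)}\log|h_i-h_j|\,\sigma_m+O(1)$, and that integral is a proximity term of size $O(T_f(r)+T_g(r))$, not $o(T_f(r)+T_g(r))$; moreover, after multiplying through by $q-n-1$, the leftover $(q-n-1)(N_{g,i}(r)+N_{g,j}(r))$ cannot be ``dominated by the corresponding summands on the right,'' since the right-hand side contains $N_{g,i}(r)+N_{g,j}(r)$ only with coefficient $1$. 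Both problems disappear if you keep $P$ itself: $P$ is holomorphic, so $N_P(r)=\int_{S(r)}\log|P|\,\sigma_m+O(1)\le T_f(r)+T_g(r)+o(T_f(r)+T_g(r))$ by Cauchy--Schwarz applied to the defining expression, and one application of Theorem \ref{2.2}(ii) to each of $f$ and $g$ finishes the proof.
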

\begin{proof}
Consider the following nonzero holomorphic function
$$ P=(f,a_i)(g,a_j)-(f,a_j)(g,a_i).$$
We see that
\begin{itemize}
\item If $z\in\mathrm{Supp}(D)\setminus\{(f,a_i)(f,a_j)=0\}$ then 
\begin{align*}
\nu_P(z)\ge 1= &\min\{\nu_{f,i}(z),\nu_{g,i}(z)\}+\min\{\nu_{f,j}(z),\nu_{g,j}(z)\}\\
&-\min\{1,\nu_{f,i}(z)+\nu_{f,j}(z)\}+D(z).
\end{align*}
\item If $z\not\in\mathrm{Supp}(D)\setminus\{(f,a_i)(f,a_j)=0\}$ then
\begin{align*}
\nu_P(z)&\ge\min\{\nu_{f,i}(z),\nu_{g,i}(z)\}+\min\{\nu_{f,j}(z),\nu_{g,j}(z)\}\\
&=\min\{\nu_{f,i}(z),\nu_{g,i}(z)\}+\min\{\nu_{f,j}(z),\nu_{g,j}(z)\}\\
&\ \ \ -\min\{1,\nu_{f,i}(z)+\nu_{f,j}(z)\}+D(z).
\end{align*}
\end{itemize}
Therefore, we always have
$$ \nu_P(z) \ge \min\{\nu_{f,i}(z),\nu_{g,i}(z)\}+\min\{\nu_{f,j}(z),\nu_{g,j}(z)\}-\min\{1,\nu_{f,i}(z)+\nu_{f,j}(z)\}+D(z).$$
Integrating both sides of this inequality, we get
\begin{align}\label{new1}
\begin{split}
N_P(r)\ge &N(r,\min\{\nu_{f,i},\nu_{g,i}\})+N(r,\min\{\nu_{f,j},\nu_{g,j}\})\\
&-N^{[1]}(r,\nu_{f,i}+\nu_{f,j})+N(r,D).
\end{split}
\end{align}

On the other hand, by Jensen's formula and the second main theorem we have

\begin{align}\label{new2}
\begin{split}
||\ N_P(r)&=\int\limits_{S(r)}\log|P(z)|\sigma_m+O(1)\\
&\le\sum_{h=f,g}\int\limits_{S(r)}\log (|(h,a_i)|^2+|(h,a_j)|^2)^{1/2}\sigma_m+O(1)\\
&=\sum_{h=f,g}\left (\int\limits_{S(r)}\log ||h||\sigma_m+o(T_h(r))\right )\\
&=T_f(r)+T_g(r)+o(T_f(r)+T_g(r))\\
&\le\frac{1}{q-n-1}\sum_{h=f,g}(\sum_{l=1}^qN_{h,l}(r)-N_{W(h)}(r)+o(T_h(r))).
\end{split}
\end{align}
From (\ref{new1}) and (\ref{new2}), we get the desired conclusion.
\end{proof}

\begin{lemma}\label{3.5}
Let $f,g$ be as in Theorem \ref{1.1}, $W^{\alpha}(f)$ and $W^{\beta}(g)$ be the general Wronskians of $f$ and $g$ respectively, where $\alpha=(\alpha_0,\ldots,\alpha_n),\beta=(\beta_0,\ldots,\beta_n)$, $|\alpha_i|\le i,|\beta_i|\le i\ (0\le i\le n)$. Let $\lambda=\dfrac{q}{nk-\frac{k^2}{2}+\frac{3k}{2}+\left [\frac{k^2}{4}\right]}.$ We have
\begin{align*}
\lambda\biggl (\sum_{l=1}^q&(N_{f,l}(r)+N_{g,l})-N_{W^{\alpha}(f)}(r)-N_{W^{\beta}(g)}(r)\biggl ) \le \sum_{l=1}^q\biggl (N(r,\min\{\nu_{f,l},\nu_{g,l}\})&\\
&+(N(r,\min\{\nu_{f,\sigma(l)},\nu_{g,\sigma(l)}\})-N^{[1]}(r,\nu_{f,l}+\nu_{f,\sigma(l)})+N(r,D)\biggl)+o(T_f(r)).
\end{align*}
\end{lemma}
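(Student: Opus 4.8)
The plan is to derive Lemma \ref{3.5} by summing the inequality of Lemma \ref{3.2} over a suitable family of pairs $(l,\sigma(l))$ and then invoking a counting/combinatorial argument to control how often each index is used. First I would observe that the hypothesis ``$f\ne g$'' (which is the interesting case — if $f=g$ there is nothing to prove) together with condition (ii) forces, for each index $l$, at least one index $j$ with $h_l\ne h_j$; more precisely one shows that the functions $h_1,\dots,h_q$ cannot split into too many groups of mutually equal functions, because two mappings agreeing on many hyperplane preimages and having identical zero divisors would be forced equal (this is the Fujimoto/Smiley-type rigidity). Concretely I expect one can produce a permutation (or a fixed-point-free involution-like map) $\sigma$ of $\{1,\dots,q\}$ such that $h_l\ne h_{\sigma(l)}$ for every $l$, so that Lemma \ref{3.2} applies to each pair $(l,\sigma(l))$.

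Next I would sum the conclusion of Lemma \ref{3.2} over $l=1,\dots,q$ with this $\sigma$. The left-hand side of Lemma \ref{3.2}, summed, produces
$$(q-n-1)\sum_{l=1}^q\Bigl(N(r,\min\{\nu_{f,l},\nu_{g,l}\})+N(r,\min\{\nu_{f,\sigma(l)},\nu_{g,\sigma(l)}\})-N^{[1]}(r,\nu_{f,l}+\nu_{f,\sigma(l)})+N(r,D)\Bigr),$$
while the right-hand side becomes a multiple of $\sum_{h=f,g}\bigl(\sum_l N_{h,l}(r)-N_{W(h)}(r)\bigr)$ — the multiple being the number of pairs, i.e.\ $q$ if $\sigma$ is a single permutation, but one has freedom to choose several permutations/cycle decompositions and average. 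The point is to match the coefficient $\lambda=\dfrac{q}{nk-\frac{k^2}{2}+\frac{3k}{2}+\left[\frac{k^2}{4}\right]}$ against $q-n-1$ appropriately: dividing through by $q-n-1$ and reorganizing, the factor $\dfrac{1}{q-n-1}\cdot(\text{number of pairs})$ must be reconciled with $\lambda$, and the term $\sum_l N(r,\min\{\nu_{f,l},\nu_{g,l}\})$ appearing on both sides (since each index $l$ is hit both as a ``first coordinate'' and as $\sigma(l')$ for some $l'$) is what lets one absorb the doubled $\min$-counting terms, leaving the asymmetric combination stated in the lemma.

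The main obstacle will be the bookkeeping that turns the crude pair-sum into the exact constant $nk-\frac{k^2}{2}+\frac{3k}{2}+\left[\frac{k^2}{4}\right]$ in the denominator of $\lambda$. This is where condition $(**)$ — the bound $\dim\bigcap_{j=1}^{k+1}(f,H_{i_j})^{-1}\{0\}\le m-2$ — must be used: it implies that at each point $z$ at most $k$ of the functions $(f,H_l)$ can vanish simultaneously (up to a codimension-$2$ set, which contributes nothing to the integrated counting functions), so the terms $N^{[1]}(r,\nu_{f,l}+\nu_{f,\sigma(l)})$ and $N(r,D)$ can be compared pointwise, and the number of times a given index can be "wasted" in the pairing $\sigma$ is controlled by $k$. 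I would phrase the pointwise estimate as: for a.e.\ $z\in\supp(D)$, writing $r(z)=\#\{l:\nu_{f,l}(z)>0\}\le k$, one bounds $\sum_l\bigl(\min\{\nu_{f,l},\nu_{g,l}\}(z)+\min\{\nu_{f,\sigma(l)},\nu_{g,\sigma(l)}\}(z)-\min\{1,(\nu_{f,l}+\nu_{f,\sigma(l)})(z)\}+D(z)\bigr)$ from below in terms of $\sum_l\nu_{f,l}(z)$, and the worst case over $r(z)\le k$ yields the quadratic-in-$k$ quantity $\frac{k^2}{2}-\frac{3k}{2}-\left[\frac{k^2}{4}\right]$ to be subtracted, with the floor $\left[\frac{k^2}{4}\right]$ coming from optimizing an integer partition (balancing the vanishing indices into two near-equal groups). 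Assembling these pointwise bounds, integrating, and combining with Lemma \ref{3.2} and the $o(T_f(r))$ absorption from Lemma \ref{3.1} then gives the stated inequality.
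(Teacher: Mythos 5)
Your proposed derivation cannot work because Lemma \ref{3.2} points in the wrong direction. Lemma \ref{3.2} bounds the combination $N(r,\min\{\nu_{f,i},\nu_{g,i}\})+N(r,\min\{\nu_{f,j},\nu_{g,j}\})-N^{[1]}(r,\nu_{f,i}+\nu_{f,j})+N(r,D)$ \emph{from above} by $\frac{1}{q-n-1}\sum_{h=f,g}(\sum_lN_{h,l}(r)-N_{W(h)}(r))$; summing it over the pairs $(l,\sigma(l))$ therefore yields an \emph{upper} bound on the right-hand side of Lemma \ref{3.5} --- this is precisely inequality (\ref{new7}) in the proof of Theorem \ref{1.1} --- whereas Lemma \ref{3.5} asserts a \emph{lower} bound on that same quantity by $\lambda\sum_{h=f,g}(\sum_lN_{h,l}(r)-N_{W(h)}(r))$. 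The two inequalities are the opposite halves of the squeeze that produces the final contradiction, and no averaging over permutations or rebalancing of constants can reverse the direction; your ``reconciliation'' of $\frac{q}{q-n-1}$ with $\lambda$ therefore has nothing to attach to. Note also that Lemma \ref{3.5} is a pure divisor inequality: it needs no hypothesis $h_l\ne h_{\sigma(l)}$, no auxiliary function, no Jensen formula, and no second main theorem.

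The paper's proof is instead a local computation at a generic point $z$ where exactly $t\le k$ of the $(f,H_j)$ vanish (this is where condition $(**)$ enters, as you correctly guessed). The step your sketch omits, and which is the heart of the argument, is the general Wronskian divisor estimate
\begin{equation*}
\sum_{l=1}^q\nu_{f,l}(z)-\nu_{W^\alpha(f)}(z)\le\max_u\sum_{l=0}^{t-1}\min\{\nu_{f,j_l}(z),|\alpha_{u(l)}|\},
\end{equation*}
and its analogue for $g$: the left-hand side of the lemma is thereby controlled by \emph{truncated} multiplicities $\min\{\nu_{f,j_l},|\alpha_{u(l)}|\}$, not by $\sum_l\nu_{f,l}(z)$ itself, so your plan of bounding the min-sum below ``in terms of $\sum_l\nu_{f,l}(z)$'' is aimed at the wrong quantity. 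One then applies $\min\{a,b\}\ge\min\{a,c\}+\min\{b,d\}-\max\{c,d\}$ with $c=|\alpha_{u(l)}|\le u(l)$ and $d=|\beta_{v(l)}|\le v(l)$, and maximizes $\sum_{l=0}^{t-1}\max\{u(l),v(l)\}$ over pairs of permutations of $\{0,\ldots,n\}$, obtaining $nt-\frac{1}{2}(t^2-t)+[\frac{t^2}{4}]$; your intuition that the floor term comes from splitting the $t$ vanishing indices into two near-equal blocks is correct, but it arises inside this permutation optimization, not from the pairing $l\mapsto\sigma(l)$. The value of $\lambda$ is then exactly what makes the residual term $q-(nt-\frac{t^2}{2}+\frac{3t}{2}+[\frac{t^2}{4}])\lambda$ nonnegative for every $t\le k$, closing the pointwise inequality $P(z)\le Q(z)$.
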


\begin{proof}
It is enough for us to show that
\begin{align}\label{new3}
\begin{split}
\lambda&\biggl (\sum_{l=1}^q(\nu_{f,l}+\nu_{g,l})-\nu_{W^\alpha(f)}-\nu_{W^\beta(g)}\biggl)\\
&\le\sum_{l=1}^q\bigl (\min\{\nu_{f,l},\nu_{g,l}\}+\min\{\nu_{f,\sigma(l)},\nu_{g,\sigma(l)}\}-\min\{1,\nu_{f,l}+\nu_{f,\sigma(l)}\}+D\bigl).
\end{split}
\end{align}
For simplicity, we denote by $P$ and $Q$ the right hand side and the left hand side of (\ref{new3}) respectively.
Fix a point $z\in\bigcup_{l=1}^q(f,H_l)^{-1}\{0\}\setminus I(f)$. We denote by $J$ the set of all indices $j\in\{1,\ldots,q\}$ such that $(f,H_j)(z)=0$. Then we see that $t=\sharp J\le k.$ We set $J=\{j_0,\ldots,j_{t-1}\}$ with $1\le j_0<\cdots<j_{t-1}\le q$. Then we have 
\begin{align*}
Q(z)&\ge 2\sum_{l=0}^{t-1}\min\{\nu_{f,j_l}(z),\nu_{g,j_l}(z)\}-\sum_{l=1}^q\min\{1,\nu_{f,l}(z)+\nu_{f,\sigma(l)}(z)\}+q\\
&\ge 2\sum_{l=0}^{t-1}\min\{\nu_{f,j_l}(z),\nu_{g,j_l}(z)\}-2t+q.
\end{align*}
Let $u,v$ be two arbitrary permutations of $\{0,\ldots,n\}$. We note that, for four positive integers $a,b,c,d$, we have $\min\{a,b\}\ge\min\{a,c\}+\min\{b,d\}-\max\{d,c\}$. Then
\begin{align*}
\min\{\nu_{f,j_l}(z),\nu_{g,j_l}(z)\}&\ge\min\{\nu_{f,j_l},|\alpha_{u(l)}|\}+\{\nu_{g,j_l},|\beta_{v(l)}|\}-\max\{|\alpha_{u(l)}|,|\beta_{v(l)}|\}\\
&\ge\min\{\nu_{f,j_l},|\alpha_{u(l)}|\}+\{\nu_{g,j_l},|\beta_{v(l)}|\}-\max\{u(l),v(l)\},
\end{align*} 
Now, it is easy to see that
\begin{align*}
\sum_{l=0}^{t-1}\max\{u(l),v(l)\}&\le\max_{0\le s\le t}[\sum_{l=n-s+1}^nl+\sum_{l=n-t+s+1}^nl]\\
&=\max_{0\le s\le t}\left (\frac{s(2n-s+1)}{2}+\frac{(t-s)(2n-t+s+1)}{2}\right )\\
&=\max_{0\le s\le t}\frac{2nt-t^2+2ks-2s^2+t}{2}=nt-\frac{1}{2}(t^2-t)+\left [\frac{t^2}{4}\right ]\\
\end{align*}
This implies that 
\begin{align}\label{new4}
\begin{split}
\sum_{l=0}^{t-1}\min\{\nu_{f,j_l}(z),\nu_{g,j_l}(z)\}\ge&\sum_{l=0}^{t-1}(\min\{\nu_{f,j_l},|\alpha_{u(l)}|\}+\{\nu_{g,j_l},|\beta_{v(l)}|\})\\
&-nt+\frac{1}{2}(t^2-t)-\left [\frac{t^2}{4}\right ].
\end{split}
\end{align}
Then from (\ref{new4}), we have
\begin{align}\label{new5}
\begin{split}
Q(z)\ge &(2-\lambda)\sum_{l=0}^{t-1}\min\{\nu_{f,j_l}(z),\nu_{g,j_l}(z)\}+\lambda\sum_{l=0}^{t-1}\min\{\nu_{f,j_l}(z),\nu_{g,j_l}(z)\}-2t+q\\
\ge &\lambda\biggl (\sum_{l=0}^{t-1}(\min\{\nu_{f,j_l},|\alpha_{u(l)}|\}+\{\nu_{g,j_l},|\beta_{v(l)}|\})-nt+\frac{1}{2}(t^2-t)-\left [\frac{t^2}{4}\right ]\biggl )\\
&+(2-\lambda)t-2t+q\\
=&\lambda(\sum_{l=0}^{t-1}(\min\{\nu_{f,j_l},|\alpha_{u(l)}|\}+\{\nu_{g,j_l},|\beta_{v(l)}|\})+q-(nt-\frac{t^2}{2}+\frac{3t}{2}+\left [\frac{t^2}{4}\right])\lambda\\
\ge&\lambda(\sum_{l=0}^{t-1}(\min\{\nu_{f,j_l},|\alpha_{u(l)}|\}+\{\nu_{g,j_l},|\beta_{v(l)}|\}).
\end{split}
\end{align}

On the other hand, by the usual argument in Nevanlinna theory, we have
\begin{align*}
\sum_{l=1}^q\nu_{f,l}(z)-\nu_{W^\alpha(f)}(z)&=\sum_{l=0}^{t-1}\nu_{f,i_l}(z)-\nu_{W^\alpha(f)}(z)\\
&\le\sum_{l=0}^{t-1}\nu_{f,i_l}(z)-\min_{u}\sum_{l=0}^{t-1}\nu_{\mathcal D^{\alpha_{u(l)}}(f,a_{i_l})(z)}\\
&\le\sum_{l=0}^{t-1}\nu_{f,i_l}(z)-\min_{u}\sum_{l=0}^{t-1}\max\{0;\nu_{(f,a_{i_l})}(z)-|\alpha_{u(l)}|\}\\
&\le\max_{u}\sum_{l=0}^{t-1}\min\{\nu_{f,i_l}(z),|\alpha_{u(l)}|\},
\end{align*}
where $\min_u$ and $\max_u$ is taken over all permutations $u$ of $\{0,\ldots,n\}$. Similarly
$$ \sum_{l=1}^q\nu_{g,l}(z)-\nu_{W^\beta(g)}(z)\le\max_{v}\sum_{l=0}^{t-1}\min\{\nu_{g,i_l}(z),|\alpha_{v(l)}|\},$$
where $\max_v$ is taken over all permutations $v$ of $\{0,\ldots,n\}$. Therefore we have
\begin{align}\label{new6}
 P(z)\le\lambda\max_{u,v}\sum_{l=0}^{t-1}\left (\min\{\nu_{f,i_l}(z),|\alpha_{u(l)}|\}+\min\{\nu_{g,i_l}(z),|\alpha_{v(l)}|\}\right).
\end{align}
Hence, from (\ref{new5}) and (\ref{new6}), we obtain
$$ P(z)\le Q(z). $$
This proves the desired inequality of the lemma.
\end{proof}

\begin{proof}[Proof of Theorem \ref{1.1}]
Suppose that $f\ne g$. By changing indices if necessary, we may assume that
$$\underbrace{\dfrac{(f,H_1)}{(g,H_1)}\equiv \dfrac{(f,H_2)}{(g,H_2)}\equiv \cdot\cdot\cdot\equiv \dfrac{(f,H_{k_1})}
{(g,H_{k_1})}}_{\text { group } 1}\not\equiv
\underbrace{\dfrac{(f,H_{k_1+1})}{(g,H_{k_1+1})}\equiv \cdot\cdot\cdot\equiv \dfrac{(f,H_{k_2})}{(g,H_{k_2})}}_{\text { group } 2}$$
$$\not\equiv \underbrace{\dfrac{(f,H_{k_2+1})}{(g,H_{k_2+1})}\equiv \cdot\cdot\cdot\equiv \dfrac{(f,H_{k_3})}{(g,H_{k_3})}}_{\text { group } 3}\not\equiv \cdot\cdot\cdot\not\equiv \underbrace{\dfrac{(f,H_{k_{s-1}+1})}{(g,H_{k_{s-1}+1})}\equiv\cdot\cdot\cdot \equiv 
\dfrac{(f,H_{k_s})}{(g,H_{k_s})}}_{\text { group } s},$$
where $k_s=q.$ 

For each $1\le i \le q,$ we set
\begin{equation*}
\sigma (i)=
\begin{cases}
i+n& \text{ if $i+n\leq q$},\\
i+n-q&\text{ if  $i+n> q$},
\end{cases}
\end{equation*}
and  
$$P_i=(f,H_i)(g,H_{\sigma (i)})-(g,H_i)(f,H_{\sigma (i)}).$$
Since  $f\not\equiv g,$ the number of elements of each group is at most $n$. Then $\dfrac{(f,H_i)}{(g,H_i)}$ and 
$\dfrac{(f,H_{\sigma (i)})}{(g,H_{\sigma (i)})}$ belong to distinct groups. Therefore $P_i\not\equiv 0\ (1\le i\le q)$. 

Then by Lemma \ref{3.1}, we have
\begin{align}\label{new7}
\begin{split}
||\ \sum_{i=1}^q&\biggl (N(r,\min\{\nu_{f,i},\nu_{g,i}\})+N(r,\min\{\nu_{f,\sigma(i)},\nu_{g,\sigma(i)}\})-N^{[1]}(r,\nu_{f,i}+\nu_{f,\sigma(i)})\\
&+N(r,D)\biggl )\le \frac{q}{q-n-1}\sum_{h=f,g}(\sum_{l=1}^qN_{h,l}(r)-N_{W(h)}(r)+o(T_h(r))),
\end{split}
\end{align}
where $W(f)=W^{\alpha}(f), W(g)=W^{\beta}(g)$ are the general Wronskians of $f$ and $g$ respectively.

On the other hand, by Lemma \ref{3.2} we have
\begin{align}\label{new8}
\begin{split}
\sum_{h=f,g}(\sum_{l=1}^qN_{h,l}(r)&-N_{W(h)}(r))\le\frac{1}{\lambda}\sum_{i=1}^q\biggl (N(r,\min\{\nu_{f,i},\nu_{g,i}\})\\
&+N(r,\min\{\nu_{f,\sigma(i)},\nu_{g,\sigma(i)}\})-N^{[1]}(r,\nu_{f,i}+\nu_{f,\sigma(i)})+N(r,D)\biggl)
\end{split}
\end{align}

Combining (\ref{new7}) and (\ref{new8}), we get
$$||\ \sum_{h=f,g}(\sum_{l=1}^qN_{h,l}(r)-N_{W(h)}(r))\le\dfrac{q}{\lambda(q-n-1)}(\sum_{l=1}^qN_{h,l}(r)-N_{W(h)}(r))+o(T_h(r)),$$
$$ \text{i.e.,}||\ \left (1-\dfrac{nk-\frac{k^2}{2}+\frac{3k}{2}+\left[\frac{k^2}{4}\right]}{q-n-1}\right )\sum_{h=f,g}(\sum_{l=1}^qN_{h,l}(r)-N_{W(h)}(r))\le o(T_h(r)).$$
We note that 
$$1-\frac{nk-\frac{k^2}{2}+\frac{3k}{2}+\left[\frac{k^2}{4}\right]}{q-n-1}=\frac{q-(n+1)(k+1)+\frac{k^2}{2}-\frac{k}{2}-\left[\frac{k^2}{4}\right]}{q-n-1}>0$$
and (by the second main theorem)
$$||\ \sum_{l=1}^qN_{h,l}(r)-N_{W(h)}(r)\ge (q-n-1)T_h(r)+o(T_h(r)). $$
Hence, the above inequality implies that
$$ \biggl (q-(n+1)(k+1)+\frac{k^2}{2}-\frac{k}{2}-\left[\frac{k^2}{4}\right]\biggl)(T_f(r)+T_g(r))=o(T_f(r)).$$
This is a contradiction. Then the supposition is impossible.

Therefore, we have $f=g$.
\end{proof}

\section{Unicity problem for holomorphic curves of annuli, punctured disks and meromorphic mappings of balls into $\P^n(\C)$}

\noindent
(a) Holomorphic curves on annuli.

For $R_0>1$, denote by $\A (R_0)$ the annulus defined by 
$$\A (R_0)=\{z;\frac{1}{R_0},|z|<R_0\}.$$
For a divisor $\nu$ on $\A (R_0)$, which we may regard as a function on  $\A (R_0)$ with values in $\mathbb Z$ whose support is a discrete subset of $\A (R_0),$ and  for a positive integer $M$ (maybe $M= \infty$), we define the counting function of $\nu$ as follows
\begin{align*}
n_0^{[M]}(t)&=\begin{cases}
\sum\limits_{1\le |z|\le t}\min\{M,\nu (z)\}&\text{ if }1\le t<R_0\\
\sum\limits_{t\le |z|<1}\min\{M,\nu (z)\}&\text{ if }\dfrac{1}{R_0}<t< 1
\end{cases}\\
 \text{ and }N_0^{[M]}(r,\nu)&=\int\limits_{\frac{1}{r}}^1 \dfrac {n_0^{[M]}(t)}{t}dt +\int\limits_1^r \dfrac {n_0^{[M]}(t)}{t}dt \quad (1<r<\infty).
\end{align*}
For brevity we will omit the character $^{[M]}$ if $M=\infty$.

Let $f$ be a holomorphic mapping from an annulus $\A (R_0)$ into $\P^n(\C)$ with a reduced representation $f=(f_0:\cdots :f_n)$. The characteristic function of $f$ is defined by
\begin{align}\label{newdef1}
 T_0(r,f)=\dfrac{1}{2\pi}\int\limits_{0}^{2\pi}\log ||f(re^{i\theta})|| d\theta +\dfrac{1}{2\pi}\int\limits_{0}^{2\pi}\log ||f(\frac{1}{r}e^{i\theta})|| d\theta-\dfrac{1}{\pi}\int\limits_{0}^{2\pi}\log ||f(e^{i\theta})|| d\theta .
\end{align}

A subset $\Delta$ of $[1;R_0)$ is said to be an $\Delta_{R_0}$-set if it satisfies $\int_{\Delta}\dfrac{dr}{(R_0-r)^{\lambda +1}} <+\infty$ for some $\lambda \ge 0$.
We denote by $S_f(r)$ quantities satisfying:
$$ S_f(r)=O\left (\log\biggl (\dfrac{T_0(r,f)}{R_0-r}\biggl )\right )\text{ as }r\longrightarrow R_0$$ 
for $r\in (1,R_0)$ except for an $\Delta_{R_0}$-set.
The holomorphic curve $f$ above is said to be admissible if it satisfies
$$\underset{r\longrightarrow R_0^-}{\mathrm{limsup}}\dfrac{T_0(r,f)}{-\log (R_0-r)}=+\infty.$$
Thus for an admissible holomorphic curve $f$, we have $S_f(r) = o(T_0(r, f ))$ as $r\longrightarrow R_0$ for all $1\le r<R_0$ except for an $\Delta_{R_0}$-set mentioned above.

Using the same argument as in the proof of the SMT for holomorphic curves from $\C$ into $\P^n(\C)$, the authors in \cite{QGH} proved the following SMT for holomorphic curves from an annulus into $\P^n(\C)$.

\begin{theorem}[{see \cite[Theorem 3.2]{QGH}}]\label{4.1}
Let $f:\mathbb{A}(R_0)\to\mathbb{P}^n(\mathbb{C})$ be a linearly nondegenerate holomorphic mapping with the Wronskian $W(f)$. Let $\{H_i\}_{i=1}^q$ $(q\ge n+2)$ be a set of $q$ hyperplanes in $\mathbb{P}^N(\mathbb{C})$ in general position. Then
\begin{align*}
\mathrm{(i)}&\ (q-N-1) T_0 (r,f)\le \sum_{i=1}^{q}N_0^{[n]}(r,\nu_{(f,H_i)})+S_f(r).\\ 
\mathrm{(ii)}&\ (q-N-1) T_0 (r,f)\le \sum_{i=1}^{q}N_0(r,\nu_{(f,H_i)})-N_0(r,\nu_{W(f)})+S_f(r). 
\end{align*} 
\end{theorem}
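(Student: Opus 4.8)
The plan is to run the classical Wronskian proof of Cartan's Second Main Theorem in the annular setting, the only genuinely new ingredients over the Euclidean case being the Jensen-type formula and the lemma on logarithmic derivatives on $\A(R_0)$, both available from \cite{QGH}. Normalize the defining forms so that $\Vert H_i\Vert = 1$. First I would record the pointwise Cartan inequality: for each $z\in\A(R_0)$ let $\mathcal{J}(z)=\{i_0<\cdots<i_n\}$ index the $n+1$ smallest of the numbers $|(f,H_i)(z)|$. Since the $H_i$ are in general position and the coordinates of a reduced representation of $f$ have no common zero, there is a constant $c_0>0$, independent of $z$, with $|(f,H_i)(z)|\ge c_0\Vert f(z)\Vert$ for every $i\notin\mathcal{J}(z)$; and general position also furnishes a nonzero constant $c_{\mathcal{J}}$ with $W\bigl((f,H_{i_0}),\dots,(f,H_{i_n})\bigr)=c_{\mathcal{J}}\,W(f)$. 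Factoring each column of this last Wronskian by the corresponding $(f,H_{i_k})$ then gives
\[
\sum_{i=1}^{q}\log\frac{\Vert f(z)\Vert}{|(f,H_i)(z)|}\;\le\;(n+1)\log\Vert f(z)\Vert-\log|W(f)(z)|+\log\Bigl|\det\bigl((f,H_{i_k})^{(j)}/(f,H_{i_k})\bigr)_{0\le j,k\le n}\Bigr|+O(1),
\]
and replacing the last term by $\sum_{\mathcal{J}}\log^{+}\bigl|\det(\cdots)_{\mathcal{J}}\bigr|$, the sum over all $\binom{q}{n+1}$ subsets $\mathcal{J}$, makes the right-hand side free of the choice $\mathcal{J}(z)$.

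Next I would integrate this inequality over the circles $|z|=r$ and $|z|=1/r$ with the three weights occurring in the definition \eqref{newdef1} of $T_0(r,f)$. The term $(n+1)\log\Vert f\Vert$ produces $(n+1)T_0(r,f)$; the Jensen formula on $\A(R_0)$ applied to the holomorphic function $W(f)$ turns $-\log|W(f)|$ into $-N_0(r,\nu_{W(f)})+O(1)$ (since $1<r<R_0$, any $\log r$ contribution stays bounded); the same Jensen formula applied to each $(f,H_i)$ — equivalently, the first main theorem on $\A(R_0)$ — rewrites the left-hand side as $qT_0(r,f)-\sum_{i=1}^{q}N_0(r,\nu_{(f,H_i)})+O(1)$; and each logarithmic-derivative proximity $\tfrac{1}{2\pi}\int\log^{+}\bigl|(f,H_{i_k})^{(j)}/(f,H_{i_k})\bigr|$, over both circles, is $S_f(r)$ by the lemma on logarithmic derivatives on $\A(R_0)$. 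Collecting terms and rearranging yields assertion (ii).

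To pass from (ii) to (i) I would invoke the standard Wronskian zero-order estimate $\nu_{W(f)}(z)\ge\sum_{i=1}^{q}\max\{0,\nu_{(f,H_i)}(z)-n\}$: if $(f,H_{i_1}),\dots,(f,H_{i_s})$ are exactly the forms vanishing at $z$ then $s\le n$ by general position and linear nondegeneracy, and enlarging $\{i_1,\dots,i_s\}$ to a set $\mathcal{J}$ of size $n+1$, writing $W(f)=c_{\mathcal{J}}^{-1}W\bigl((f,H_{i_0}),\dots,(f,H_{i_n})\bigr)$, and inspecting the column structure shows the determinant vanishes at $z$ to order at least $\sum_{l}\bigl(\nu_{(f,H_{i_l})}(z)-n\bigr)^{+}$. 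Since $\nu_{(f,H_i)}-(\nu_{(f,H_i)}-n)^{+}=\min\{n,\nu_{(f,H_i)}\}$, integrating this and feeding it into (ii) converts $\sum_iN_0(r,\nu_{(f,H_i)})-N_0(r,\nu_{W(f)})$ into $\sum_iN_0^{[n]}(r,\nu_{(f,H_i)})$, which is (i).

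The hard part — and the only point at which the annular geometry genuinely enters — is the analytic input of the second step: the Jensen formula on $\A(R_0)$ and, above all, the lemma on logarithmic derivatives on $\A(R_0)$ with the sharp error term $S_f(r)=O\bigl(\log(T_0(r,f)/(R_0-r))\bigr)$ valid for $r\in(1,R_0)$ outside a $\Delta_{R_0}$-set. This rests on the annular form of the Borel growth lemma applied to the boundary behaviour on $|z|=r$ and $|z|=1/r$ simultaneously; granting it (as established in \cite{QGH}), the combinatorial Cartan argument of the first and third steps is formally identical to the case of holomorphic curves from $\C$ into $\P^n(\C)$.
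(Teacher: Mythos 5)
Your proposal is correct and coincides with the paper's own route: the paper does not reprove Theorem \ref{4.1} but cites \cite{QGH}, stating that it follows by ``the same argument as in the proof of the SMT for holomorphic curves from $\C$ into $\P^n(\C)$,'' which is exactly the Cartan--Wronskian scheme you reproduce (pointwise inequality over the $n+1$ smallest $|(f,H_i)(z)|$, annular Jensen formula plus the logarithmic derivative lemma on $\A(R_0)$ for assertion (ii), and the Wronskian zero-order estimate $\nu_{W(f)}\ge\sum_i\max\{0,\nu_{(f,H_i)}-n\}$ to pass to the truncated form (i)). Your observation that the $\log r$ terms from the annular Jensen formula stay bounded because $1<r<R_0$ is the only annulus-specific bookkeeping point, and you handle it correctly.
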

Here, we note that in \cite[Theorem 3.2]{QGH} there is only the assertion (i), but the assertion (ii) appears in the proof.

Hence, the second main theorem in this case is completely same as the case of holomorphic curves on $\C$. Then by repeating the same lines of the proof of Theorem \ref{1.1} we easily get the following theorem.

\begin{theorem}\label{4.2}
Let $f,g$ be two linearly nondegenerate admissible holomorphic curves of $\A(R_0)$ into $\P^n(\C)$ and let $H_1,\ldots.,H_q$ be $q$ hyperplanes of $\P^n(\C)$ in general position with
$$\dim  \left(\bigcap_{j=1}^{k+1}f^{-1}(H_{i_j})\right) =\emptyset \quad (1 \le i_1<\cdots <i_{k+1}\le q).$$
Assume that

(i) $f^{-1}(H_i)=g^{-1}(H_i)$ for all $1\le i\le q$,

(ii) $f=g$ on $\bigcup_{i=1}^qf^{-1}(H_i).$

\noindent
If $q>(n+1)(k+1)-\frac{k^2}{2}+\frac{k}{2}+\left[\frac{k^2}{4}\right]$ then $f=g$.
\end{theorem}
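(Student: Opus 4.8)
The plan is to transfer the proof of Theorem \ref{1.1} verbatim to the annular setting, replacing every occurrence of the ``$\C^m$'' Nevanlinna apparatus with its $\A(R_0)$-counterpart. The point is that the only external inputs to the proof of Theorem \ref{1.1} are: the second main theorem in both its truncated and Wronskian forms (Theorem \ref{2.2}), Jensen's formula on $S(r)$, and the two purely algebraic pointwise divisor inequalities of Lemmas \ref{3.2} and \ref{3.5}. The pointwise inequalities involve only the divisors $\nu_{(f,H_i)}$, $\nu_{(g,H_i)}$, $\nu_{W(f)}$, $\nu_{W(g)}$, and $D$, and hold at each point of $\A(R_0)$ exactly as they held at each point of $\C^m$; so they require no change at all. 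The second main theorem is supplied in the annular case by Theorem \ref{4.1}, which, as the authors remark, contains both the truncated form (i) and the Wronskian form (ii), with the error term $o(T_f(r))$ replaced throughout by $S_f(r)$ and the ``$||$'' convention replaced by ``outside an $\Delta_{R_0}$-set.'' For admissible $f$ and $g$ one has $S_f(r)=o(T_0(r,f))$ and $S_g(r)=o(T_0(r,g))$, so these error terms behave exactly as before.

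First I would establish the analogue of Lemma \ref{3.1}: applying Theorem \ref{4.1}(i) to $g$, using $N_0^{[n]}(r,\nu_{(g,H_i)})\le n N_0^{[1]}(r,\nu_{(g,H_i)}) = n N_0^{[1]}(r,\nu_{(f,H_i)})$ (by hypothesis (i)), and bounding $N_0^{[1]}\le N_0 \le T_0(r,f)+O(1)$ via the annular first main theorem, one gets $(q-n-1)T_0(r,g)\le qn\,T_0(r,f)+S_g(r)$, hence $T_0(r,g)=O(T_0(r,f))$ outside an $\Delta_{R_0}$-set, and symmetrically; in particular both curves are admissible together and $S_f(r)$ and $S_g(r)$ are mutually $o$ of each other's characteristic. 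Next I would reprove Lemma \ref{3.2}: the holomorphic function $P=(f,a_i)(g,a_j)-(f,a_j)(g,a_i)$ on $\A(R_0)$ satisfies the same pointwise lower bound on $\nu_P$, which integrates (using $N_0$ in place of $N$) to the same lower bound on $N_{0,P}(r)$; the upper bound comes from the annular Jensen formula applied to $\log|P|$ together with $\log(|(h,a_i)|^2+|(h,a_j)|^2)^{1/2} \le \log\|h\| + O(1)$ and then Theorem \ref{4.1}(ii). Lemma \ref{3.5} is purely combinatorial on divisors and carries over with no change once one integrates against the annular kernel. Finally I would run the grouping argument of the proof of Theorem \ref{1.1}: since $f\ne g$ each group has at most $n$ members, so $P_i\ne 0$ for all $i$, and combining the annular versions of (\ref{new7}) and (\ref{new8}) with Theorem \ref{4.1}(ii) yields
$$\Bigl(q-(n+1)(k+1)+\tfrac{k^2}{2}-\tfrac{k}{2}-\bigl[\tfrac{k^2}{4}\bigr]\Bigr)\bigl(T_0(r,f)+T_0(r,g)\bigr)=o(T_0(r,f)),$$
which, under $q>(n+1)(k+1)-\tfrac{k^2}{2}+\tfrac{k}{2}+[\tfrac{k^2}{4}]$ and admissibility, forces $T_0(r,f)$ to be bounded — contradicting admissibility. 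Hence $f=g$.

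I do not expect a genuine obstacle here, since the argument is designed to be ``SMT-driven'' and the annular SMT has been arranged to look identical to the classical one. The only points that need a word of care are: (a) checking that the first main theorem and Jensen's formula are available in the $\A(R_0)$ setting with the normalization (\ref{newdef1}) — this is standard and implicit in \cite{QGH}; (b) making sure the exceptional sets behave, i.e. that a finite union of $\Delta_{R_0}$-sets is again an $\Delta_{R_0}$-set, so that all the ``outside an $\Delta_{R_0}$-set'' clauses can be merged; and (c) confirming that admissibility of $f$ forces admissibility of $g$ (and conversely), so that the final contradiction ``$T_0(r,f)$ bounded'' is genuinely absurd. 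Because all of this is routine, in the paper I would simply state Theorem \ref{4.2} and remark that its proof is obtained from that of Theorem \ref{1.1} by the substitutions $T_f\rightsquigarrow T_0(r,f)$, $N^{[M]}\rightsquigarrow N_0^{[M]}$, $o(T_f(r))\rightsquigarrow S_f(r)$, and ``$||$'' $\rightsquigarrow$ ``outside an $\Delta_{R_0}$-set'', invoking Theorem \ref{4.1} in place of Theorem \ref{2.2}.
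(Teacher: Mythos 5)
Your proposal matches the paper exactly: the paper gives no separate argument for this theorem, merely observing that Theorem \ref{4.1} supplies the annular second main theorem in both truncated and Wronskian form (with $S_f(r)=o(T_0(r,f))$ for admissible curves) and that the proof of Theorem \ref{1.1} then repeats line by line. Your additional remarks on Jensen's formula, exceptional sets, and mutual admissibility are sensible bookkeeping that the paper leaves implicit.
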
 

\noindent
(b) Holomorphic curve on a punctured disc.

We set punctured discs on $\hat{\C}=\C\cup\{\infty\}$ about $\infty$ by
\begin{align*}
 \Delta^* &=\{z\in\C\ :\ |z|\ge 1\},\\
 \Delta^*(t) &=\{z\in\C \ :\ |z|\ge t\}, \quad t\ge 1.
\end{align*}
Let $E$ be a divisor on $\Delta^*$. The {\it truncated counting function to level $d$} of $E$ defined by
\begin{align*}
N^{[d]}(r,E) := \int\limits_1^r \frac{n^{[d]}(t,E)}{t}dt\quad
(1 < r < +\infty).
\end{align*}
We simply write  $N(r,E)$ for $N^{[+\infty]}(r,E).$

Let $f:\Delta^*\rightarrow\P^n(\C )$ be a holomorphic curve. There exist a neighborhood $U$ of $\Delta^*$ in $\C^m$ and a reduced representation $(f_0 : \cdots : f_n)$ on $U$ of $f$.
We set
$||f||:=(|f_0|^2+\cdots +|f_n|^2)^{\frac{1}{2}}$.

Denote by $\Omega$ the Fubini - Study form of $\P^n(\C )$. The characteristic function of $f$ with respect to $\Omega$ is defined by
\begin{align*}
T_f(r)=\dfrac{1}{2\pi}\int_{S (r)}\log ||f(re^{i\theta})||d\theta -
 \dfrac{1}{2\pi}\int_{S (1)}\log ||f(e^{i\theta})||d\theta
\end{align*}

For a meromorphic function $\varphi$ on $\Delta^*$, the proximity  function $m(r,\varphi)$ is defined by
$$m(r,\varphi) := \frac{1}{2\pi}\int\limits_{S(r)} \log^+ |\varphi| d\theta .$$
Due to Noguchi, we have the following lemma on logarithmic derivative as follows.
\begin{theorem}[{Lemma on logarithmic derivative \cite{Nog81}}]
Let $\varphi$ be a nonzero meromorphic function on $\Delta^*.$ Then
\begin{align*}
\biggl|\biggl|\quad m\biggl(r,\dfrac{\varphi '}{\varphi}\biggl)
= O (\log^+T_\varphi (r))+ O(\log r).
\end{align*}
\end{theorem}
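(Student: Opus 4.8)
The plan is to carry over Nevanlinna's classical proof of the lemma on logarithmic derivative, the only new ingredient being a Poisson--Jensen representation on annuli in place of the one on discs; no reduction to the case of $\C$ is available here, since the possible essential singularity of $\varphi$ at the puncture $\infty$ is precisely what the estimate has to control. Write $\Delta^*=\{|z|\ge 1\}$, fix $r>1$, and let $R>r$. On the compact annulus $A(1,R)=\{1\le |z|\le R\}$ the function $\varphi$ is meromorphic, and for all but countably many $R$ it has no zero or pole on $\partial A(1,R)$; for such $R$ and for $z$ in the open annulus,
\begin{align*}
\log|\varphi(z)|=\int_{\partial A(1,R)}\log|\varphi(\zeta)|\,d\omega_z(\zeta)+\sum_{c}\ord_c(\varphi)\,g_A(z,c),
\end{align*}
where $\omega_z$ is the harmonic measure of $A(1,R)$ at $z$, $g_A$ is the positive Green's function of $A(1,R)$, and $c$ runs over the zeros and poles of $\varphi$ in $A(1,R)$.

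Differentiating in $z$ and restricting to the circle $|z|=r$, a compact subset of the open annulus, yields a pointwise bound
\begin{align*}
\left|\frac{\varphi'(z)}{\varphi(z)}\right|\le C(r,R)\int_{\partial A(1,R)}\bigl|\log|\varphi(\zeta)|\bigr|\,|d\zeta|+\sum_{c}\Bigl(\frac{1}{|z-c|}+C(r,R)\Bigr),
\end{align*}
where $C(r,R)$ bounds the density of $\omega_z$, the Green's function $g_A$, and their first $z$-derivatives on $|z|=r$; an inspection of these annular kernels (which are distance-like in the modulus coordinate) gives $\log C(r,R)=O(\log R)+O\bigl(\log\tfrac{1}{R-r}\bigr)$ once $r$ is bounded away from $1$. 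By the first main theorem applied to $\varphi$ and to $1/\varphi$, the boundary integral is at most a polynomial in $R$ times $T_\varphi(R)+O(1)$ (its part over $|\zeta|=1$ being a constant depending only on $\varphi$), and by Jensen's formula the number of points $c$ in the sum is $O(T_\varphi(R))+O(1)$.

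Integrating over $|z|=r$, using $\tfrac{1}{2\pi}\int_{|z|=r}\tfrac{|dz|}{|z-c|}\le C'$ uniformly in $c$ and the concavity of $t\mapsto\log(1+t)$, the zero/pole sum contributes $\log^+\bigl(O(T_\varphi(R))\bigr)+O(1)$ to $m(r,\varphi'/\varphi)$ and the boundary term contributes $O(\log^+T_\varphi(R))+O(\log C(r,R))$, so that
\begin{align*}
m(r,\varphi'/\varphi)=O(\log^+T_\varphi(R))+O(\log R)+O\Bigl(\log\tfrac{1}{R-r}\Bigr).
\end{align*}
If $T_\varphi(r)$ stays bounded the lemma is trivial, so assume $T_\varphi(r)\ge 1$ and take $R=r+\tfrac{r}{T_\varphi(r)}$, whence $\log R=\log r+O(1)$ and $\log\tfrac{1}{R-r}=\log^+T_\varphi(r)-\log r+O(1)$; a standard Borel-type calculus lemma then gives $T_\varphi(R)\le 2\,T_\varphi(r)$ for all $r$ outside a set $E$ with $\int_E dr<\infty$. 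For those $r$ the displayed estimate collapses to $m(r,\varphi'/\varphi)=O(\log^+T_\varphi(r))+O(\log r)$, which is the assertion (the range $1<r\le 2$, where $m(r,\varphi'/\varphi)$ is trivially bounded because $\varphi$ is meromorphic near $|z|=1$, being handled separately).

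I expect the real work to be the first step: setting up the Poisson--Jensen formula on $A(1,R)$ and, above all, proving the uniform bounds on the annular harmonic measure, the annular Green's function, and their first $z$-derivatives on $|z|=r$, with the explicit dependence on $r$ and $R$ used above. Unlike on the disc, these objects are not elementary closed expressions but infinite series (the method of images, equivalently theta functions), so one must verify that these series and their termwise $z$-derivatives converge uniformly on compact subsets of the open annulus with effective bounds. Once that analytic input is in hand, the rest is the usual Nevanlinna bookkeeping together with the Borel lemma, exactly as over $\C$.
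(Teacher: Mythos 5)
First, a point of reference: the paper does not prove this statement at all --- it is imported verbatim from Noguchi \cite{Nog81} and used as a black box --- so there is no in-paper argument to compare yours with; what follows measures your plan against what a complete proof would need.

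Your overall strategy (Poisson--Jensen on the exhausting annuli $A(1,R)$, differentiation of the kernels, then a Borel-type choice of $R$ close to $r$) is the right kind of argument and is in the spirit of the classical proofs. But as written it has genuine gaps, and the decisive one is the one you yourself flag at the end: the effective bounds on the annular harmonic-measure density, the annular Green's function, and their first $z$-derivatives, with the claimed dependence $\log C(r,R)=O(\log R)+O\bigl(\log\tfrac{1}{R-r}\bigr)$ \emph{uniformly as the modulus of $A(1,R)$ tends to infinity}, are asserted rather than proved. Since everything downstream of that estimate is routine Nevanlinna bookkeeping, the proposal is a plan for a proof, not a proof.

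There is also a concrete false step in the part you do carry out. The quantity $\frac{1}{2\pi}\int_{|z|=r}\frac{|dz|}{|z-c|}$ is \emph{not} bounded uniformly in $c$: it diverges when $c$ lies on the circle $|z|=r$ and grows like $\log\frac{1}{|c|-r}$ in absolute value of the gap as $|c|\to r$, so concavity of $t\mapsto\log(1+t)$ cannot be applied to $\sum_c|z-c|^{-1}$ as you do. The standard repair is to pass to a fractional power first, using $\bigl(\sum_c a_c\bigr)^{1/2}\le\sum_c a_c^{1/2}$ together with the bound $\frac{1}{2\pi}\int_0^{2\pi}|re^{i\theta}-c|^{-1/2}\,d\theta\le C$ (valid here since $r,|c|\ge 1$), and only then apply concavity; without some such device the zero/pole sum is not controlled. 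Two smaller repairs: the count $n(R)=O(T_\varphi(R))$ of zeros and poles needs a radius comparison of the form $n(R)\le N(R',\cdot)/\log(R'/R)$ folded into the same Borel argument; and your choice $R=r+r/T_\varphi(r)$ yields an exceptional set of finite \emph{logarithmic} measure, whereas the paper's convention $||$ requires $\int_E dr<\infty$ --- taking $R=r+1/T_\varphi(r)$ instead fixes this and still gives $\log\tfrac{1}{R-r}=\log^+T_\varphi(r)$ and $\log R=\log r+O(1)$.
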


Thank to this theorem and repeating the argument as in the proof of the second main theorem in $\C^m$, we have the following theorem.

\begin{theorem}[{Second main theorem}]\label{4.4}
Let $f:\Delta^*\to\mathbb{P}^n(\mathbb{C})$ be a linearly nondegenerate holomorphic curve with the Wronskian $W(f)$. Let $\{H_i\}_{i=1}^q$ $(q\ge n+2)$ be a set of $q$ hyperplanes in $\mathbb{P}^N(\mathbb{C})$ in general position. Then
\begin{align*}
\mathrm{(i)}&\ ||\ (q-n-1) T_f(r)\le \sum_{i=1}^{q}N_0^{[n]}(r,\nu_{(f,H_i)})+o(T_f(r)).\\ 
\mathrm{(ii)}&\ ||\ (q-n-1) T_f(r)\le \sum_{i=1}^{q}N_0(r,\nu_{(f,H_i)})-N_0(r,\nu_{W(f)})+o(T_f(r)). 
\end{align*} 
\end{theorem}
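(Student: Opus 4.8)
The plan is to transplant the classical proof of the Second Main Theorem via the Wronskian from $\C^m$ to $\Delta^*$; the only inputs are Noguchi's lemma on logarithmic derivative recalled just above, Jensen's formula in the normalization that fixes $T_f$ and the counting functions, and the elementary combinatorics of hyperplanes in general position. It suffices to prove (ii), since (i) then follows formally. First I would fix a reduced representation $f=(f_0:\cdots:f_n)$ on a neighborhood of $\Delta^*$, write $H_i=\{a_{i0}\omega_0+\cdots+a_{in}\omega_n=0\}$, put $F_i=(f,H_i)=\sum_j a_{ij}f_j$ (holomorphic, and $\not\equiv 0$ by linear nondegeneracy), and let $W(f)=\det\bigl(f_j^{(k)}\bigr)_{0\le k,j\le n}\not\equiv 0$ be the Wronskian.

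The heart of the argument is a pointwise inequality. Since the $H_i$ are in general position, there is a constant $\delta>0$ depending only on the $H_i$ such that, for every $z$, at most $n$ of the numbers $|F_i(z)|/\Vert f(z)\Vert$ are smaller than $\delta$; choosing a set $I=I(z)\subset\{1,\dots,q\}$ with $\#I=n+1$ containing all those exceptional indices gives
$$\prod_{i=1}^{q}\frac{\Vert f(z)\Vert}{|F_i(z)|}\le \delta^{-(q-n-1)}\cdot\frac{\Vert f(z)\Vert^{\,n+1}}{|W(f)(z)|}\cdot\frac{|W(f)(z)|}{\prod_{i\in I}|F_i(z)|}.$$
Because $\{F_i:i\in I\}$ arises from $\{f_j\}$ by an invertible linear change, $W(f)=c_I\,W\bigl((F_i)_{i\in I}\bigr)$ with $c_I\ne 0$, and factoring $F_i$ out of the $i$-th column shows the last quotient equals $|c_I|^{-1}\bigl|\det\bigl(F_i^{(k)}/F_i\bigr)_{k,\,i\in I}\bigr|$, a sum of products of logarithmic derivatives. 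Taking logarithms, bounding $\log^+$ of the finitely many possible determinants by the sum over all $I$, integrating over $S(r)$ and applying Noguchi's lemma (together with $T_{F_i}(r),\,T_{F_i^{(k)}}(r)=O(T_f(r))$ up to lower-order terms) yields, outside an exceptional set,
$$\frac{1}{2\pi}\int_{S(r)}\sum_{i=1}^{q}\log\frac{\Vert f\Vert}{|F_i|}\,d\theta\le (n+1)\,\frac{1}{2\pi}\int_{S(r)}\log\Vert f\Vert\,d\theta-\frac{1}{2\pi}\int_{S(r)}\log|W(f)|\,d\theta+o(T_f(r)).$$
Jensen's formula on $\Delta^*$ (in the chosen normalization, with the $O(\log r)$ corrections absorbed) gives $\frac{1}{2\pi}\int_{S(r)}\log|F_i|\,d\theta=N_0(r,\nu_{(f,H_i)})+O(1)$, $\frac{1}{2\pi}\int_{S(r)}\log|W(f)|\,d\theta=N_0(r,\nu_{W(f)})+O(1)$ and $\frac{1}{2\pi}\int_{S(r)}\log\Vert f\Vert\,d\theta=T_f(r)+O(1)$; substituting and rearranging gives $(q-n-1)T_f(r)\le\sum_{i=1}^{q}N_0(r,\nu_{(f,H_i)})-N_0(r,\nu_{W(f)})+o(T_f(r))$, which is (ii).

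To deduce (i) I would invoke the classical estimate on the vanishing of the Wronskian for hyperplanes in general position, $\nu_{W(f)}(z)\ge\sum_{i=1}^{q}\bigl(\nu_{(f,H_i)}(z)-\nu_{(f,H_i)}^{[n]}(z)\bigr)$ (at most $n$ of the $H_i$ pass through $f(z)$, and one peels off order $\ge \nu_{(f,H_i)}(z)-n$ from the determinant for each such hyperplane); integrating gives $N_0(r,\nu_{W(f)})\ge\sum_{i=1}^{q}\bigl(N_0(r,\nu_{(f,H_i)})-N_0^{[n]}(r,\nu_{(f,H_i)})\bigr)$, and inserting this into (ii) leaves exactly $(q-n-1)T_f(r)\le\sum_{i=1}^{q}N_0^{[n]}(r,\nu_{(f,H_i)})+o(T_f(r))$, i.e.\ (i). There is no new idea here beyond the $\C^m$ case; the only point that genuinely needs checking — and the place where I expect the (modest) work to lie — is the verification that Jensen's formula, the First Main Theorem and Noguchi's logarithmic derivative lemma all hold on $\Delta^*$ in the stated normalization with error terms that are $o(T_f(r))$ off the exceptional set, so that the classical chain of estimates transfers verbatim.
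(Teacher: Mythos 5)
Your proposal is correct and is essentially the same argument the paper has in mind: the paper gives no details for Theorem \ref{4.4}, merely invoking Noguchi's lemma on logarithmic derivatives and ``repeating the argument as in the proof of the second main theorem in $\C^m$,'' which is exactly the classical Cartan--Wronskian chain of estimates you carry out (pointwise product inequality from general position, logarithmic-derivative determinant, Jensen, and the standard Wronskian vanishing estimate to pass from (ii) to (i)). The one point you rightly flag --- that the $O(\log r)$ and $O(\log^+T_f(r))$ error terms are $o(T_f(r))$ only under a growth (admissibility) hypothesis on $f$ --- is also left implicit in the paper.
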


Then, repeating the same lines of the proof of Theorem \ref{1.1} we get a similar theorem as follows.

\begin{theorem}\label{4.2}
Let $f,g$ be two linearly nondegenerate admissible holomorphic curves of $\Delta^*$ into $\P^n(\C)$ and let $H_1,\ldots.,H_q$ be $q$ hyperplanes of $\P^n(\C)$ in general position with
$$\dim  \left(\bigcap_{j=1}^{k+1}f^{-1}(H_{i_j})\right) =\emptyset \quad (1 \le i_1<\cdots <i_{k+1}\le q).$$
Assume that

(i) $f^{-1}(H_i)=g^{-1}(H_i)$ for all $1\le i\le q$,

(ii) $f=g$ on $\bigcup_{i=1}^qf^{-1}(H_i).$

\noindent
If $q>(n+1)(k+1)-\frac{k^2}{2}+\frac{k}{2}+\left[\frac{k^2}{4}\right]$ then $f=g$.
\end{theorem}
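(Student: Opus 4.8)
The plan is to run, line for line, the argument that yields Theorem~\ref{1.1}, with the following replacements throughout: the second main theorem on $\C^m$ (Theorem~\ref{2.2}) by its punctured-disc version (Theorem~\ref{4.4}); the counting functions $N(r,\cdot)$ and $N^{[M]}(r,\cdot)$ by $N_0(r,\cdot)$ and $N_0^{[M]}(r,\cdot)$; the characteristic function by the characteristic function $T_f$ on $\Delta^*$; and the $o(T_f(r))$ error terms by the ones appropriate to $\Delta^*$, whose smallness is guaranteed by Noguchi's lemma on logarithmic derivatives quoted above together with the admissibility of $f$ and $g$. The backbone is then the punctured-disc analogues of Lemma~\ref{3.1}, Lemma~\ref{3.2} and Lemma~\ref{3.5}, followed by the grouping argument. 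First, the analogue of Lemma~\ref{3.1}: applying Theorem~\ref{4.4}(i) to $g$, using $N_0^{[n]}(r,\nu_{(g,H_i)})\le nN_0^{[1]}(r,\nu_{(g,H_i)})=nN_0^{[1]}(r,\nu_{(f,H_i)})$ (the last equality by hypothesis (i)) and bounding $N_0^{[1]}(r,\nu_{(f,H_i)})\le T_f(r)+O(1)$ by the first main theorem on $\Delta^*$, one obtains $||\ (q-n-1)T_g(r)\le qnT_f(r)+o(T_f(r)+T_g(r))$, hence $||\ T_g(r)=O(T_f(r))$, and symmetrically $||\ T_f(r)=O(T_g(r))$. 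The only genuinely new ingredient here is $N_0(r,\nu_{(f,H_i)})\le T_f(r)+O(1)$, i.e.\ the first main theorem on the punctured disc, which follows from the definition of $T_f$ on $\Delta^*$ and the Poisson--Jensen formula on the annulus $\{1\le|z|\le r\}$.

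Next, the analogue of Lemma~\ref{3.2}. Fix $1\le i<j\le q$ with $h_i\ne h_j$ and set $P=(f,a_i)(g,a_j)-(f,a_j)(g,a_i)$, a nonzero holomorphic function on $\Delta^*$. The lower bound
$$\nu_P(z)\ge\min\{\nu_{f,i}(z),\nu_{g,i}(z)\}+\min\{\nu_{f,j}(z),\nu_{g,j}(z)\}-\min\{1,\nu_{f,i}(z)+\nu_{f,j}(z)\}+D(z)$$
is a pointwise statement using only hypotheses (i) and (ii) at each point of $\supp D$, so its proof in Lemma~\ref{3.2} transfers verbatim; integrating against $N_0$ gives the analogue of \eqref{new1}. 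For the upper bound one uses the Jensen-type formula on $\Delta^*$ to write $N_{0,P}(r)=\frac{1}{2\pi}\int_0^{2\pi}\log|P(re^{i\theta})|\,d\theta+\frac{1}{2\pi}\int_0^{2\pi}\log|P(\tfrac1re^{i\theta})|\,d\theta-\frac{1}{\pi}\int_0^{2\pi}\log|P(e^{i\theta})|\,d\theta+O(1)$, then $\log|P|\le\sum_{h=f,g}\log(|(h,a_i)|^2+|(h,a_j)|^2)^{1/2}+O(1)\le\sum_{h=f,g}\log\|h\|+O(1)$, so that $||\ N_{0,P}(r)\le T_f(r)+T_g(r)+o(T_f(r)+T_g(r))$; finally Theorem~\ref{4.4}(ii) gives $T_f(r)+T_g(r)\le\frac{1}{q-n-1}\sum_{h=f,g}\bigl(\sum_{l=1}^qN_0(r,\nu_{(h,H_l)})-N_0(r,\nu_{W(h)})\bigr)+o(T_f(r)+T_g(r))$. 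Combining the two bounds gives the punctured-disc form of Lemma~\ref{3.2}. The analogue of Lemma~\ref{3.5} is immediate: inequality \eqref{new3} is a pointwise inequality between divisors, established by the permutation/admissible-set computation which makes no reference to the ambient domain, so it holds word for word; one only integrates with $N_0$ in place of $N$.

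Now the endgame, which is a copy of the proof of Theorem~\ref{1.1}. Suppose $f\ne g$ and partition $\{H_1,\dots,H_q\}$ into maximal blocks of consecutive indices on which $h_i=(f,H_i)/(g,H_i)$ is constant. Since $f$ (hence $g$) is linearly non-degenerate and $f\not\equiv g$, no block contains $n+1$ indices: if $h_{i_0}=\cdots=h_{i_n}=\varphi$ then the vector $(f_0-\varphi g_0,\dots,f_n-\varphi g_n)$ is annihilated by the $n+1$ linearly independent vectors $a_{i_0},\dots,a_{i_n}$, hence vanishes, forcing $f=g$. Consequently, defining $\sigma(i)\equiv i+n\pmod q$ as in the proof of Theorem~\ref{1.1}, $h_i$ and $h_{\sigma(i)}$ lie in different blocks, so $P_i:=(f,H_i)(g,H_{\sigma(i)})-(g,H_i)(f,H_{\sigma(i)})\not\equiv0$ for all $i$. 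Summing the punctured-disc analogue of Lemma~\ref{3.2} over the pairs $(i,\sigma(i))$ and inserting the punctured-disc analogue of Lemma~\ref{3.5} with $\lambda=q/(nk-\tfrac{k^2}{2}+\tfrac{3k}{2}+[\tfrac{k^2}{4}])$, exactly as in \eqref{new7} and \eqref{new8}, yields
$$||\ \Bigl(1-\frac{nk-\tfrac{k^2}{2}+\tfrac{3k}{2}+[\tfrac{k^2}{4}]}{q-n-1}\Bigr)\sum_{h=f,g}\Bigl(\sum_{l=1}^qN_0(r,\nu_{(h,H_l)})-N_0(r,\nu_{W(h)})\Bigr)\le o(T_f(r)).$$
The coefficient equals $\dfrac{q-(n+1)(k+1)+\tfrac{k^2}{2}-\tfrac{k}{2}-[\tfrac{k^2}{4}]}{q-n-1}$, which is positive by the hypothesis $q>(n+1)(k+1)-\tfrac{k^2}{2}+\tfrac{k}{2}+[\tfrac{k^2}{4}]$, while Theorem~\ref{4.4}(ii) gives $||\ \sum_{l=1}^qN_0(r,\nu_{(h,H_l)})-N_0(r,\nu_{W(h)})\ge(q-n-1)T_h(r)+o(T_h(r))$. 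Hence $\bigl(q-(n+1)(k+1)+\tfrac{k^2}{2}-\tfrac{k}{2}-[\tfrac{k^2}{4}]\bigr)(T_f(r)+T_g(r))=o(T_f(r))$, which is impossible. Therefore $f=g$.

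Everything local and combinatorial — the divisor inequalities inside Lemmas~\ref{3.2} and \ref{3.5}, and the block argument — transfers without change, so the only step that truly requires punctured-disc analysis is the upper estimate for $N_{0,P}(r)$ replacing \eqref{new2}, namely the passage from $\log|P|$ to $\log\|f\|+\log\|g\|$ up to an error that is $o(T_f(r)+T_g(r))$ outside an exceptional set. On $\Delta^*$ this error is a finite sum of proximity functions of logarithmic-derivative-type quantities, and the fact that it is small relative to $T_f$ is precisely Noguchi's lemma on logarithmic derivatives combined with admissibility; this is exactly the same point one checks when deducing Theorem~\ref{4.4} from that lemma, and no further difficulty arises. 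With it in hand, the proof of Theorem~\ref{1.1} carries over line for line.
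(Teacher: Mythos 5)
Your proposal is correct and follows exactly the route the paper takes: the paper's entire ``proof'' of this theorem is the single remark that one repeats the proof of Theorem~\ref{1.1} verbatim, replacing the second main theorem on $\C^m$ by its punctured-disc version (Theorem~\ref{4.4}), which is precisely what you carry out in detail. The only blemish is that the Jensen-type formula you quote for $N_{0,P}(r)$ (with boundary integrals over the circles $|z|=r$, $|z|=1/r$ and $|z|=1$) is the one for the annulus $\A(R_0)$ rather than for $\Delta^*=\{|z|\ge 1\}$, where the circle $|z|=1/r$ lies outside the domain; the correct Jensen formula on the annulus $\{1\le|z|\le r\}$ gives the same upper bound $||\ N_{0,P}(r)\le T_f(r)+T_g(r)+o(T_f(r)+T_g(r))$, so the argument is unaffected.
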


\noindent
(c) Meromorphic mappings from balls.

Let $R_0>0$. Set $B(R_0)=\{z\in\C^m\ :\ ||z||<R_0\}$ and fix a positive number $r_0<R_0$.

Let $\phi$ be a non-zero meromorphic function on $B(R)$. The truncated (to level $d$) counting function of zero of $\phi$ is defined by
$$ N^{[d]}_{\phi}(r,r_0)=\int\limits_{r_0}^r\frac{n^{[d]}(t,\nu^0_{\phi})}{t}dt. $$
Let $f$ be a holomorphic mapping of $B(R_0)$ into $\P^n(\C)$ with a reduce representation $f = (f_0:\ldots :f_n)$ on $B(R_0)$ (since $B(R_0)$ is Cauchy II domain).
The characteristic function is define by
$$T_f(r,r_0) :=\int\limits_{S(r)}\log ||f||\sigma_m- \int\limits_{S(r_0)}\log ||f||\sigma_m.$$
The meromorphic mapping $f$ is said to be admissible if
$$ \underset{r\rightarrow R^{-}}{\lim\mathrm{sup}}\frac{T_f(r,r_0)}{-\log (R-r_0)}=+\infty.$$
If $f$ is linearly non-degenerate then the general Wronskian of $f$ is defined as the same as in the case of meromorphic mappings on $\C^m$.
Then we have the following theorem due to Fujimoto \cite{Fu86}.
\begin{theorem}[{see \cite[Theorem 2.13]{Fu86}}]\label{4.3}
Let $f:B(R_0)\to\mathbb{P}^n(\mathbb{C})$ be a linearly nondegenerate holomorphic mapping with the general Wronskian $W(f)$. Let $\{H_i\}_{i=1}^q$ $(q\ge n+2)$ be a set of $q$ hyperplanes in $\mathbb{P}^n(\mathbb{C})$ in general position. Then
\begin{align*}
\mathrm{(i)}&\ (q-n-1) T_f (r,r_0)\le \sum_{i=1}^{q}N^{[n]}_{(f,H_i)}(r)+O(\log^+T_f(r,r_0+\log\frac{1}{R_0-r}))\\ 
\mathrm{(ii)}&\ (q-n-1) T_f (r,r_0)\le \sum_{i=1}^{q}N_{(f,H_i)}(r)-N_{W(f)}(r)+O(\log^+T_f(r,r_0+\log\frac{1}{R_0-r})) 
\end{align*} 
for all $r\in [r_0;R_0)$ outside a Borel subset $E$ of $[r_0;R_0)$ with $\int\limits_E\frac{dt}{R_0-t}\le +\infty.$
\end{theorem}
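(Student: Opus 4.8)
This is \cite[Theorem 2.13]{Fu86}; as with Theorem \ref{4.1}, only the truncated form (i) is displayed there, but assertion (ii) is produced in the course of its proof. For completeness I sketch the plan, which is Cartan's method transplanted to the ball.

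Write $F_i=(f,H_i)$ for $1\le i\le q$; these are holomorphic on $B(R_0)$ since $f$ admits a reduced representation there, and I fix the general Wronskian $W(f)=W^{\alpha}(f)$. The first step is a pointwise Cartan estimate. Since the $H_i$ are in general position one has $\|f(z)\|\le C\max_i|F_i(z)|$ with $C$ depending only on the configuration; moreover at most $n$ of the values $|F_i(z)|$ can be smaller than $c\|f(z)\|$ for a fixed $c>0$. Hence, selecting for each $z$ outside $\{W(f)\prod_iF_i=0\}$ the set $I(z)=\{j_0,\dots,j_n\}$ of the $n+1$ indices with smallest $|F_i(z)|$, one obtains
\begin{align*}
\sum_{i=1}^q\log\frac{\|f(z)\|}{|F_i(z)|}\le (n+1)\log\|f(z)\|-\log|W(f)(z)|+\log\Bigl|\det\bigl(\tfrac{\mathcal{D}^{\alpha_i}F_j}{F_j}(z)\bigr)_{i=0,\dots,n,\ j\in I(z)}\Bigr|+O(1),
\end{align*}
using $|F_i(z)|\ge c\|f(z)\|$ for $i\notin I(z)$ and the fact that $W(f)$ agrees, up to a nonzero constant depending only on $I(z)$, with $W^{\alpha}(F_{j_0},\dots,F_{j_n})=\bigl(\prod_kF_{j_k}\bigr)\det\bigl(\mathcal{D}^{\alpha_i}F_{j_k}/F_{j_k}\bigr)$.

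Next I would integrate over $S(r)$ against $\sigma_m$. On the left, the First Main Theorem gives $\int_{S(r)}\log(\|f\|/|F_i|)\,\sigma_m=T_f(r,r_0)-N_{(f,H_i)}(r)+O(1)$, so the left side becomes $qT_f(r,r_0)-\sum_iN_{(f,H_i)}(r)+O(1)$. On the right, Jensen's formula replaces $\int_{S(r)}\log\|f\|\,\sigma_m$ by $T_f(r,r_0)+O(1)$ and $\int_{S(r)}\log|W(f)|\,\sigma_m$ by $N_{W(f)}(r)+O(1)$ (here $W(f)$ is holomorphic, so only its zeros contribute), while the determinant term, expanded into the finitely many products of logarithmic derivatives $\mathcal{D}^{\alpha}F_i/F_i$ attached to the finitely many $(n+1)$-element index subsets, is controlled by Fujimoto's lemma on the logarithmic derivative on a ball \cite{Fu86}; this is exactly what yields the error term $O\bigl(\log^+T_f(r,r_0)+\log\tfrac{1}{R_0-r}\bigr)$ valid for $r\in[r_0,R_0)$ outside a Borel set $E$ with $\int_E dt/(R_0-t)<\infty$. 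Collecting these estimates gives (ii). To pass from (ii) to (i) I would invoke the purely local inequality $\sum_{i=1}^q\bigl(\nu_{(f,H_i)}-\nu^{[n]}_{(f,H_i)}\bigr)\le\nu_{W(f)}$: at a point where only the forms $(f,H_i)$ with $i$ in a set $S$ vanish (and $\sharp S\le n$, by general position) the general Wronskian vanishes to order at least $\sum_{i\in S}\max\{0,\nu_{(f,H_i)}-n\}$; integrating this and substituting into (ii) yields (i).

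I expect the one genuinely delicate ingredient to be the logarithmic derivative estimate with precisely the stated error term and exceptional set: on $B(R_0)$ one must control the growth of $T_f(r,r_0)$ as $r\to R_0$, and the Borel set $E$ with $\int_E dt/(R_0-t)<\infty$ is the unavoidable price. The rest is formal once the general-position geometry of $\{H_i\}$ has been used to secure the pointwise Cartan estimate and the local Wronskian bound.
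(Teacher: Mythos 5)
The paper gives no proof of this statement at all: Theorem \ref{4.3} is simply quoted from Fujimoto \cite[Theorem 2.13]{Fu86}, so there is nothing in the source to compare your argument against line by line. Your sketch is a correct outline of the standard Cartan--Fujimoto argument that the cited reference carries out: the pointwise estimate via the $n+1$ smallest $|F_i(z)|$ and the factorization $W(F_{j_0},\dots,F_{j_n})=\bigl(\prod_kF_{j_k}\bigr)\det\bigl(\mathcal{D}^{\alpha_i}F_{j_k}/F_{j_k}\bigr)=c_I\,W(f)$, integration with Jensen's formula, the several-variable logarithmic-derivative lemma on the ball producing the error term $O(\log^+T_f(r,r_0)+\log\frac{1}{R_0-r})$ outside a set $E$ with $\int_E dt/(R_0-t)<\infty$ (which is indeed the correct reading of the garbled error term in the statement), and the local inequality $\sum_i(\nu_{(f,H_i)}-\nu^{[n]}_{(f,H_i)})\le\nu_{W(f)}$ to pass from (ii) to (i). The only component you do not supply in detail is precisely the logarithmic-derivative estimate on $B(R_0)$, which you correctly identify as the delicate point and which is exactly what the citation to \cite{Fu86} is meant to cover.
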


Therefore, the second main theorem in this case is totally similar to that in the case of $\C^m$. Hence, by repeating the same lines of the proof of Theorem \ref{1.1} we also get a similar theorem.

\begin{theorem}\label{4.2}
Let $f,g$ be two linearly nondegenerate admissible meromorphc mappings of $B(R_0)$ into $\P^n(\C)$ and let $H_1,\ldots.,H_q$ be $q$ hyperplanes of $\P^n(\C)$ in general position with
$$\dim  \left(\bigcap_{j=1}^{k+1}(f,H_{i_j})^{-1}\{0\}\right) \le m-2 \quad (1 \le i_1<\cdots <i_{k+1}\le q).$$
Assume that

(i) $(f,H_i)^{-1}\{0\}=(g,H_i)^{-1}\{0\}$ for all $1\le i\le q$,

(ii) $f=g$ on $\bigcup_{i=1}^q(f,H_i)^{-1}\{0\}.$

\noindent
If $q>(n+1)(k+1)-\frac{k^2}{2}+\frac{k}{2}+\left[\frac{k^2}{4}\right]$ then $f=g$.
\end{theorem}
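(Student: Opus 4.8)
The plan is to reduce the statement to Theorem \ref{1.1} by observing that the ball case differs from the $\C^m$ case only in notation and in the form of the error term. Specifically, the Second Main Theorem \ref{4.3} for admissible mappings on $B(R_0)$ is \emph{formally identical} to Theorem \ref{2.2}: both the truncated version (i) and the general-Wronskian version (ii) hold, with the error term $O(\log^+ T_f(r,r_0)+\log\tfrac{1}{R_0-r})$ playing the role of $o(T_f(r))$ and the exceptional set $E\subset [r_0,R_0)$ satisfying $\int_E \tfrac{dt}{R_0-t}<+\infty$ playing the role of the Borel set with finite Lebesgue measure. Since $f,g$ are assumed admissible, $\log^+T_f(r,r_0)+\log\tfrac{1}{R_0-r}=o(T_f(r,r_0))$ outside such an exceptional set, so every estimate in the proof of Theorem \ref{1.1} that was written with $o(T_h(r))$ and the symbol $||$ carries over verbatim.

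The key steps, in order, are as follows. First I would reprove the analogue of Lemma \ref{3.1}: using Theorem \ref{4.3}(i) together with $N^{[n]}_{(g,H_i)}(r,r_0)\le n N^{[1]}_{(g,H_i)}(r,r_0)=nN^{[1]}_{(f,H_i)}(r,r_0)\le nT_f(r,r_0)+O(1)$ (hypothesis (i) gives the equality of the level-$1$ counting functions since $(f,H_i)^{-1}\{0\}=(g,H_i)^{-1}\{0\}$), one deduces $T_g(r,r_0)=O(T_f(r,r_0))$ and symmetrically, so in particular $f$ is admissible iff $g$ is and their error terms are interchangeable. Second, I would reprove Lemma \ref{3.2}: the auxiliary function $P=(f,a_i)(g,a_j)-(f,a_j)(g,a_i)$ is holomorphic on $B(R_0)$, the divisor estimate $\nu_P(z)\ge \min\{\nu_{f,i},\nu_{g,i}\}(z)+\min\{\nu_{f,j},\nu_{g,j}\}(z)-\min\{1,\nu_{f,i}+\nu_{f,j}\}(z)+D(z)$ is purely pointwise and unchanged, and the upper bound for $N_P(r,r_0)$ comes from Jensen's formula on balls plus the definition of $T_f$ together with Theorem \ref{4.3}(ii). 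Third, Lemma \ref{3.5} is entirely an arithmetic inequality between divisors (the combinatorial estimate on $\sum_{l}\max\{u(l),v(l)\}$, the elementary inequality $\min\{a,b\}\ge\min\{a,c\}+\min\{b,d\}-\max\{c,d\}$, and the Wronskian-pole estimate $\sum_l\nu_{f,l}(z)-\nu_{W^\alpha(f)}(z)\le\max_u\sum_l\min\{\nu_{f,i_l}(z),|\alpha_{u(l)}|\}$), so it transfers with no change at all once one knows the general Wronskian on $B(R_0)$ is defined exactly as on $\C^m$, which the excerpt states. Finally, I would run the concluding argument: split the $q$ hyperplanes into groups according to the equivalence classes of $h_i=(f,H_i)/(g,H_i)$, use that each group has at most $n$ members (since $f\ne g$), set $\sigma(i)$ as in the proof of Theorem \ref{1.1} so that $P_i\not\equiv 0$, sum the Lemma \ref{3.2}-type inequalities over $i$, feed in the Lemma \ref{3.5}-type inequality, and obtain
$$||\ \left(1-\frac{nk-\frac{k^2}{2}+\frac{3k}{2}+[\frac{k^2}{4}]}{q-n-1}\right)\sum_{h=f,g}\left(\sum_{l=1}^q N_{h,l}(r,r_0)-N_{W(h)}(r,r_0)\right)\le o(T_f(r,r_0)),$$
which combined with Theorem \ref{4.3}(ii) contradicts $q>(n+1)(k+1)-\frac{k^2}{2}+\frac{k}{2}+[\frac{k^2}{4}]$ unless $f=g$.

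I do not expect a genuine obstacle here; the only point requiring a little care is bookkeeping the error terms. In the $\C^m$ setting one freely writes $o(T_h(r))$ for the sum $o(T_f(r))+o(T_g(r))$ after invoking Lemma \ref{3.1}; in the ball setting one must instead track $O(\log^+T_f(r,r_0)+\log\tfrac1{R_0-r})$ and verify that admissibility makes this $o(T_f(r,r_0))$ as $r\to R_0^-$ outside the exceptional set, and that the union of finitely many such exceptional sets (one per application of Theorem \ref{4.3} and of the logarithmic derivative lemma implicit in it) is still an exceptional set of the same type, since the condition $\int_E\frac{dt}{R_0-t}<+\infty$ is stable under finite unions. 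Granting this — which is standard and is exactly the reason the excerpt asserts "the second main theorem in this case is totally similar" — the proof is a line-by-line repetition of the proof of Theorem \ref{1.1} with $T_f(r)$ replaced by $T_f(r,r_0)$, $N(r,\cdot)$ by $N(r,r_0,\cdot)$, and the symbol $||$ reinterpreted as "for all $r\in[r_0,R_0)$ outside a Borel set $E$ with $\int_E\frac{dt}{R_0-t}<+\infty$". Hence $f=g$.
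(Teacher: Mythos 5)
Your proposal is correct and follows exactly the route the paper takes: the paper's entire ``proof'' of this theorem is the single remark that Fujimoto's Second Main Theorem (Theorem \ref{4.3}) in both its truncated and Wronskian forms makes the situation formally identical to $\C^m$, so the proof of Theorem \ref{1.1} repeats line by line. You have in fact supplied more detail than the paper does (tracking the error terms, the stability of the exceptional sets under finite unions, and the pointwise nature of Lemmas \ref{3.2} and \ref{3.5}), and all of it is accurate.
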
 

\subsection*{Acknowledgements}
This research is funded by Vietnam National Foundation for Science and Technology Development (NAFOSTED) under grant number 101.04-2018.01.

\vskip0.2cm
{\footnotesize 
\noindent
{\sc Si Duc Quang}\\
$^1$Department of Mathematics, Hanoi National University of Education, 136-Xuan Thuy - Cau Giay - Hanoi, Vietnam,\\
$^2$Thang Long Instutute of Mathematics and Applied Sciences, Nghiem Xuan Yem, Hoang Mai, Hanoi,\\
\textit{E-mail}: quangsd@hnue.edu.vn

\end{document}